\newtheorem{theorem}{Theorem}[section]
\newtheorem{lemma}[theorem]{Lemma}
\newtheorem{proposition}[theorem]{Proposition}
\newtheorem{corollary}[theorem]{Corollary}
\newtheorem{definition}[theorem]{Definition}
\theoremstyle{remark}
\newtheorem{rmks}{Remarks}[section]
\newtheorem*{Examples}{Examples}
\newcommand{\B}{\mathcal{B}}
\newcommand{\C}{\mathcal{C}}
\newcommand{\F}{\mathcal{F}}
\renewcommand{\S}{\mathcal{S}}
\newcommand{\T}{\mathcal{T}}
\newcommand{\U}{\mathcal{U}}
\newcommand{\X}{\mathcal{X}}
\newcommand{\MLR}{\mathsf{MLR}}
\newcommand{\llb}{\llbracket}
\newcommand{\rrb}{\rrbracket}
\newcommand{\fr}{{^\frown}}
\newcommand{\ml}{Martin-L\"{o}f\ }
\definecolor{purple}{rgb}{.9,0.2,.9}
\newcommand{\cs}{2^\omega}
\newcommand{\ccs}{3^\omega}
\newcommand{\uh}{{\upharpoonright}}
\renewcommand{\phi}{\varphi}
\newcommand{\str}{2^{<\omega}}
\newcommand{\diverge}{{\uparrow}}
\newcommand{\ran}{\mathrm{ran}}
\title{Algorithmically Random Functions and Effective Capacities}
\author{Douglas Cenzer and Christopher P. Porter}
\begin{document}

\maketitle

\begin{abstract} We continue the investigation of algorithmically random functions and closed sets, and in particular the connection with the notion of capacity. 
We study notions of random continuous functions given in terms of a family of computable measures called symmetric Bernoulli measures.  We isolate one particular class of random functions that we refer to as random online functions $F$, where the value of $y(n)$ for $y = F(x)$ may be computed from the values of $x(0),\dots,x(n)$. We show that random online functions are neither onto nor one-to-one.  We give a necessary condition on the members of the ranges of random online functions in terms of initial segment complexity and the associated computable capacity.  Lastly, we introduce the notion of Martin-L\"of random online \emph{partial} function on $\cs$ and give a family of online partial random functions the ranges of which are precisely the random closed sets introduced in \cite{BCDW07}.

\textbf{Keywords:}  algorithmic randomness, computability theory, random closed sets, random continuous functions, capacity.
\end{abstract}

\section{Introduction}

In a series of recent papers \cite{BCDW07,BCRW09,BCRW08,BCTW11},
Barmpalias, Brodhead, Cenzer et al have developed the notion of
algorithmic randomness for closed sets and continuous functions on
$\cs$ as part of the broad program of algorithmic randomness.  The 
study of random closed sets was furthered by Axon \cite{A10}, Diamondstone and 
Kjos-Hanssen \cite{DK12}, and others.  Cenzer et al \cite{BCTW11} studied the relationship between notions of random closed sets with respect to different computable probability measures and effective capacities.  


Here we look more closely at the relationship between random continuous functions and effective capacity.  First, we generalize the notion of random continuous function from \cite{BCRW08} to a wider class of computable measures that we call symmetric Bernoulli measures.  Then we study properties of the  effective capacities associated to the classes of functions that are random with respect to various symmetric Bernoulli measures.  We isolate one such class of functions, which we refer to as random online continuous functions.  We study the reals in the range of a random online continuous function, as well as the average values of random online continuous functions.  

It turns out that a number of effective capacities cannot be generated by a class of functions that are random with respect to a symmetric Bernoulli measure.  We identify a class of measures on the space of functions that yield random online partial continuous functions and prove that a wide class of effective capacities can be generated by such functions, including the effective capacity that is associated to the original definition of algorithmically random closed set from \cite{BCDW07}. 

Algorthmic randomness for closed sets was defined in \cite{BCDW07} starting from a natural computable measure on the space $\C(\cs)$ of closed subsets of $\cs$ and using the notion of \ml randomness given by \ml tests.   It was shown that $\Delta^0_2$ random closed sets exist but there are no random $\Pi^0_1$ closed sets. It is shown that any random closed set is
perfect, has measure 0, and has box dimension $\log_2 \frac43$. A random closed set has no $n$-c.e.\ elements.

 Algorithmic randomness for continuous functions on $\cs$ was defined in 
\cite{BCRW08} by defining a representation of such functions in $3^{\omega}$ and using the uniform measure on $\cs$  to induce a measure on the space $\F(\cs)$ of continuous functions. It was shown that random $\Delta^0_2$ continuous functions exist, but no computable function can be random and no random function can map a
 computable real to a computable real. The image  of a random continuous function is always a perfect set and hence
  uncountable. For any $y \in \cs$, there exists a random continuous
  function $F$ with $y$ in the image of $F$. Thus the image of a
  random continuous function need not be a random closed set. The set
  of zeroes of a random continuous function is a random closed
  set (if nonempty). 

The connection between measure and capacity for the
space $\C(\cs)$ was investigated in \cite{BCTW11}.  For any computable measure
$\mu^*$ on $\C(\cs)$, a computable capacity may be defined by letting $T(Q)$ be the
$\mu^*$-measure of the family of closed sets $K$ which have nonempty
intersection with $Q$ for each $Q\in\C(\cs)$. An effective version of the Choquet's
theorem was obtained  by showing that every computable capacity may be obtained from
a computable measure in this way.  Conditions were given on a measure $\nu^*$ on $\C(\cs)$ that characterize when the capacity of
all $\nu^*$-random closed sets equals zero.  For certain computable measures, effectively
closed sets with positive capacity and with Lebesgue measure zero are constructed.  For computable measures, a real $q$ is upper semi-computable
if and only if there is an effectively closed set with capacity $q$.

The problem of characterizing the possible members of random closed sets was studied by Diamondstone and Kjos-Hanssen in \cite{DK12}.
They gave an alternative presentation for random closed sets and showed a strong connection between the effective Hausdorff dimension 
of a real $x$ and the membership of $x$ in a random closed set. 

The outline of the paper is as follows.  In Section \ref{sec-background}, we provide the requisite background.  In Section \ref{sec-bernoulli} we define symmetric Bernoulli measures on the
space of continuous functions on $\cs$ and prove basic facts about the domains and ranges of functions that are random with respect to such measures.  We study the connection between 
random functions and effective capacities on the space of closed subsets of $\cs$ in Section \ref{sec-functions-to-capacities}.  Next, we introduce and study the notion of a random online function in 
Section \ref{sec-online}.  Lastly, in Section \ref{sec-partial-online}, we define random online partial functions and establish a correspondence between the ranges of such functions and various families
of random closed sets.

The authors would like to thank Laurent Bienvenu and the anonymous referees for helpful comments on an earlier draft of this paper.

\section{Background}\label{sec-background}

Some definitions are needed.  For a finite string $\sigma
\in \{0,1\}^n$, let $|\sigma| = n$ denote the length of $n$. 
For two strings $\sigma,\tau$, say that $\tau$ \emph{extends} $\sigma$
and write $\sigma \prec \tau$ if $|\sigma| \leq |\tau|$ and $\sigma(i)
= \tau(i)$ for $i < |\sigma|$. For $x \in \cs$, $\sigma \prec x$ means
that $\sigma(i) = x(i)$ for $i < |\sigma|$. Let $\sigma^{\frown} \tau$
denote the concatenation of $\sigma$ and $\tau$ and let
$\sigma^{\frown} i$ denote $\sigma^{\frown}(i)$ for $i=0,1$. Let $x
\uh n = (x(0),\dots,x(n-1))$.  The empty string will be denoted $\epsilon$.
Two reals $x$ and $y$ may be coded together into $z = x \oplus y$, where 
$z(2n) = x(n)$ and $z(2n+1) =
y(n)$ for all $n$.
For a finite string $\sigma$, let $\llb\sigma\rrb$ denote $\{x \in \cs:
 \sigma \prec x \}$. We shall refer to $\llb\sigma\rrb$ as the \emph{interval}
 determined by $\sigma$. Each such interval is a clopen set and the
 clopen sets are just finite unions of intervals. 
Now a nonempty closed set $P$ may be identified with a tree $T_P
 \subseteq \{0,1\}^*$ where $T_P = \{\sigma: P \cap \llb\sigma\rrb \neq
 \emptyset\}$. Note that $T_P$ has no dead ends. That is, if $\sigma
 \in T_P$, then either $\sigma^{\frown}0 \in T_P$ or $\sigma^{\frown}1
 \in T_P$ (or both).
For an arbitrary tree $T \subseteq \{0,1\}^*$, let $[T]$ denote the
set of infinite paths through $T$.
It is well-known that $P \subseteq \cs$ is a closed set if and only if
$P = [T]$ for some tree $T$.  $P$ is a $\Pi^0_1$ class, or an effectively
closed set, if $P = [T]$ for some computable tree $T$. 


%

A measure $\nu$ on $\cs$ is \emph{computable} if there is a computable
function $\hat\nu:\str\times\omega\rightarrow\mathbb{Q}_2$ (where $\mathbb{Q}_2=\{\frac{m}{2^n}:n,m\in\omega\}$)
such that $|\nu(\llb\sigma\rrb)-\hat\nu(\sigma,i)|\leq 2^{-i}$ for every $\sigma\in\str$ and $i\in\omega$.  A computable measure
on $\ccs$ is similarly defined.


Martin-L\"{o}f \cite{ML66} observed that stochastic properties could
be viewed as special kinds of effectively presented measure zero sets and defined a random
real as one that avoids these measure 0
sets. More precisely, a real $x \in \cs$ is Martin-L\"{o}f random if for every
effective sequence $S_1, S_2, \dots$ of c.e.\ open sets with $\mu(S_n)
\leq 2^{-n}$, $x \notin \bigcap_n S_n$ (where $\mu$ is the uniform
measure on $\cs$).   This can be straightforwardly extended to any computable measure 
$\nu$ on $\cs$ or $\ccs$ by replacing the condition $\mu(S_n)
\leq 2^{-n}$ with $\nu(S_n)\leq 2^{-n}$.


%

Given a measure $\mu$ on $3^{\omega}$, we define a measure $\mu^*$ on the space $\C(\cs)$
of closed subsets of $\cs$ as follows. Given a closed set $Q \subseteq
\cs$, let $T = T_Q$ be the tree without dead ends such that $Q =[T]$. Let
$\sigma_0, \sigma_1, \ldots$ enumerate the elements of $T$ in order,
first by length and then lexicographically. We then define the 
{\em (canonical) code} $x = x_Q = x_T$ of $Q$ by recursion such that for each $n$, $x(n) =2$ if both
$\sigma_n\fr 0$ and $\sigma_n\fr 1$ are in $T$, $x(n) =1$ if
$\sigma_n\fr 0 \notin T$ and $\sigma_n\fr 1 \in T$, and $x(n)
=0$ if $\sigma_n\fr 0 \in T$ and $\sigma_n\fr 1 \notin T$. We
then define $\mu^*$ by setting
\begin{equation}
\mu^*({\mathcal X}) = \mu(\{x_Q:Q \in {\mathcal X}\})
\end{equation}
for any ${\mathcal X} \subseteq {\mathcal C}(\cs)$.  For the uniform measure, this means that
given $\sigma \in T_Q$, there is probability $\frac13$ that both
$\sigma^{\frown}0 \in T_Q$ and $\sigma^{\frown}1 \in T_Q$ and, for
$i=0,1$, there is probability $\frac13$ that only $\sigma^{\frown}i
\in T_Q$.  Brodhead, Cenzer, and Dashti \cite{BCDW07} 
defined a closed set $Q \subseteq \cs$ to be 
(Martin-L\"{o}f) random if $x_Q$ is  (Martin-L\"{o}f) random.  We will sometimes refer to the random
closed sets given by the uniform measure on $\ccs$ as the \emph{standard random closed sets}.

Given a continuous function $F$ on $\cs$, observe that for any $\sigma\in\str$ there is some $n\in\omega$ and $\tau\in\str$ of length $n$ such that for all $x\in\llb\sigma\rrb$, $F(x)\uh n=\tau$.

Let $\F(\cs)$ denote the collection of all continuous functions $F:\cs\rightarrow\cs$. Each $F\in\F(\cs)$ may be represented by a function $f: \str\setminus\{\epsilon\} \to \{0,1,2\}$, defined inductively as follows.  
Suppose we have defined $f(\sigma\uh i)=e_i$ for $i=1,\dotsc, n$ and every $\sigma$ of length $n$.  Then given some $\sigma$ of length $n+1$, where $f(\sigma\uh i)=e_i$ for $i=1,\dotsc, n$,
 let $\rho=(n_1,\dotsc,n_k)$ be the result of deleting all 2s from $(e_1,\dotsc,e_n)$. If for all $x\in\llb\sigma\rrb$, $F(x)\uh (k+1) = \rho^\frown j$ for some  $j\in\{0,1\}$, then we may set $e_{n+1}=j$, although we may set $e_{n+1}=2$.  If there is no such $j$, we must set $e_{n+1}=2$.  It is helpful to think of the 2's as delaying the output of $F$ along initial segments of some $x\in\cs$.  For each $F\in\F(\cs)$, there are infinitely many functions that represent $F$, and $f: \str\setminus\{\epsilon\} \to \{0,1,2\}$ defines a (possibly partial) $F\in\F(\cs)$.

%


%
Each representing function $f: \str\setminus\{\epsilon\} \to \{0,1,2\}$ can be straightforwardly coded as some $z\in\ccs$.
We can thus define a measure $\mu^{**}$ on $\F(\cs)$ induced 
by the uniform measure on $\ccs$.   As with the case of computable measures
on $\C(\cs)$, every computable measure $\nu$ on $\ccs$ induces a computable measure
$\nu^{**}$ on $\F(\cs)$.
 Brodhead, Cenzer, and Remmel  \cite{BCR07} defined $F\in\F(\cs)$ to be Martin-L\"of random
if $F$ is represented by a representing function coded by a Martin-L\"of random $z\in\ccs$.  We will sometimes refer to the random
continuous functions given by the uniform measure on $\ccs$ as the \emph{standard random continuous functions}.

Next we consider the notion of a capacity. 

\begin{definition}
A \emph{capacity} on $\C(\cs)$ is a function $\T: \C(\cs) \to [0,1]$ with
$\T(\emptyset) =0$ such that
\begin{enumerate}
\item $\T$ is monotone increasing, that is, $Q_1 \subseteq Q_2$ implies $\T (Q_1) \leq \T(Q_2)$.
\item $\T$ has the \emph{alternating of infinite order} property, that is,
for $n \geq 2$ and any $Q_1, \dots, Q_n \in \C$
\[
\T(\bigcap_{i=1}^n Q_i) \leq \sum \{(-1)^{|I|+1} \T(\bigcup_{i \in
  I}Q_i): \emptyset \neq I \subseteq \{1,2,\dots,n\} \}.
\]
\item If $Q = \bigcap_n Q_n$ and $Q_{n+1} \subseteq Q_n$ for all
$n$, then $\T(Q) = \lim_{n \to \infty} \T(Q_n)$.
\end{enumerate}
\end{definition}

We will also assume, unless otherwise specified, that 
$\T(\cs) = 1$.  We will say that a capacity $\T$ is computable if it is computable on
the family of clopen sets, that is, if there is a computable function $F$ from 
the Boolean algebra $\B$ of clopen sets into $[0,1]$ such that 
$F(B) = \T(B)$ for any $B \in \B$. 

Given a measure $\mu^*$ on the space $\C(\cs)$ of closed sets, define 
\[
\T_{\mu}(Q)=\mu^{*}(\{\X\in\C(\cs):\X\cap\S\neq\emptyset\}),
\]
That is, $\T_{\mu}(Q)$ is the probability that a randomly chosen closed set meets $Q$. The following effective version
of the Choquet Capacity Theorem was shown in \cite{BCTW11}.

\begin{theorem}[\cite{BCTW11}] \label{thchc} \begin{enumerate}
\item For any computable  probability measure $\mu$  on $\C(\cs)$, $\T_{\mu}$ is a computable capacity.

\item For any computable capacity $\T$ on $\C(\cs)$, there is a computable measure
$\mu$ on the space of closed sets such that $\T =
\T_{\mu}$. 
\end{enumerate}
\end{theorem}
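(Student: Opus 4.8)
\emph{Part (1).} This is routine. Write $H(Q)=\{\X\in\C(\cs):\X\cap Q\neq\emptyset\}$, so $\T_\mu(Q)=\mu^*(H(Q))$. Then $\T_\mu(\emptyset)=\mu^*(\emptyset)=0$; monotonicity holds since $Q_1\subseteq Q_2$ gives $H(Q_1)\subseteq H(Q_2)$; and if $Q_{n+1}\subseteq Q_n$ with $Q=\bigcap_nQ_n$ then compactness of $\cs$ gives $H(Q)=\bigcap_nH(Q_n)$ (a closed set meeting every $Q_n$ meets $Q$, by extracting a convergent subsequence of witnesses), so $\T_\mu(Q)=\lim_n\T_\mu(Q_n)$ by continuity from above of $\mu^*$. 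For the alternating property, observe that $H(\bigcap_iQ_i)\subseteq\bigcap_iH(Q_i)$ and $H(\bigcup_{i\in I}Q_i)=\bigcup_{i\in I}H(Q_i)$, and that for any measurable $A_i$ in a probability space $\mu^*(\bigcap_iA_i)=\sum_{\emptyset\neq I}(-1)^{|I|+1}\mu^*(\bigcup_{i\in I}A_i)$ (a dual form of inclusion--exclusion, got by complementation); apply this with $A_i=H(Q_i)$. Finally $\T_\mu$ is computable on clopen sets: for a fixed string $\sigma$, decoding $x_\X$ node-by-node in length-lexicographic order reveals after finitely many steps whether $\sigma\in T_\X$, so $H(\llb\sigma\rrb)$ --- hence $H(B)$ for any clopen $B$ --- pulls back under $\X\mapsto x_\X$ to a clopen subset of $\ccs$ whose $\mu$-measure is computable uniformly in a canonical index for $B$.

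\emph{Part (2).} The plan is to realize $\mu^*$ as the law of a random dead-end-free tree $\tau\subseteq\str$, by an effective, level-by-level use of the (finite) Choquet theorem. Fix $n$. The restriction of $\T$ to the finite Boolean algebra generated by the length-$n$ intervals is a monotone, completely alternating set function on the finite lattice $\P(2^n)$, with value $0$ at $\emptyset$ and $1$ at $2^n$; by the finite Choquet theorem (M\"obius inversion) it is the hitting functional of a unique probability distribution $q_n$ on the nonempty subsets of $2^n$, whose point masses are given, for a dead-end-free $t\subseteq2^{\leq n}$ with leaf set $L_t$ and $K_t:=\bigcup\{\llb\rho\rrb:\rho\in2^n\setminus L_t\}$, by
\[
q_n(\{L_t\})\;=\;\sum_{J\subseteq L_t}(-1)^{|J|+1}\,\T\Big(K_t\cup\bigcup_{\ell\in J}\llb\ell\rrb\Big).
\]
Non-negativity of this quantity is the one place the hypothesis is used: for $|L_t|\geq2$ it is exactly the instance of the alternating axiom with $Q_j=K_t\cup\llb\ell_j\rrb$ (here $\bigcap_jQ_j=K_t$, since the $\llb\ell_j\rrb$ are pairwise disjoint), and for $|L_t|=1$ it is monotonicity. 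Uniqueness in the finite Choquet theorem forces the $q_n$ to cohere under the projections $2^{n+1}\to2^n$, so the assignment $p(t):=q_n(\{L_t\})$ to the cylinders $\{\X:T_\X\cap2^{\leq n}=t\}$ is consistent across levels, and Carath\'eodory extension yields a unique Borel probability measure $\mu^*$ on $\C(\cs)$, equivalently (via the canonical coding) a measure $\mu$ on $\ccs$. Computability is automatic: M\"obius inversion at level $n$ is a fixed finite $\pm$-linear map and $\T$ is computable on clopen sets, so the $p(t)$, hence $\mu$, are computable.

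\emph{Part (2), $\T_\mu=\T$.} For a clopen $B=\bigcup\{\llb\sigma\rrb:\sigma\in S\}$ with $S\subseteq2^n$, decomposing $\{\X:\X\cap B\neq\emptyset\}$ according to the level-$n$ part of $T_\X$ shows $\T_\mu(B)=\sum\{p(t):L_t\cap S\neq\emptyset\}$, which equals $\T(B)$ precisely because $q_n$ has the restriction of $\T$ to $\P(2^n)$ as its hitting functional. For arbitrary closed $Q$, write $Q=\bigcap_nB_n$ with $B_n=\{x:x\uh n\in T_Q\}$; then $\T(Q)=\lim_n\T(B_n)$ by capacity axiom~(3), $\T_\mu(Q)=\lim_n\T_\mu(B_n)$ by the continuity from above from Part~(1), and the two sequences agree term-by-term. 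Hence $\T=\T_\mu$ with $\mu$ computable.

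\emph{Main obstacle.} The substance lies in Part~(2): identifying the correct finite-level marginals $q_n$ and proving they are non-negative and coherent --- in effect, effectivizing the finite Choquet theorem. Non-negativity is a direct quotation of the alternating axiom via the substitution above, and coherence follows from uniqueness; so if one is content to cite the classical finite Choquet theorem, little remains, and otherwise one re-proves that theorem here, where the real labour is keeping the inclusion--exclusion signs straight. The computability and the passage from clopen to arbitrary closed sets are then painless, since the whole construction is a uniform finite rational operation on the clopen values of $\T$.
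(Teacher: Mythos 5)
This theorem is not proved in the paper at all: it is quoted from \cite{BCTW11}, so there is no internal proof to compare against. Judged on its own merits, your argument is essentially correct. Part (1) is handled as one would expect (the only nonroutine points, the dual inclusion--exclusion identity for $\mu^*(\bigcap_i A_i)$ and the fact that $H(Q)=\bigcap_n H(Q_n)$ for a decreasing sequence via compactness, are both stated and both true; computability on clopen sets is right because membership of $\sigma$ in $T_\X$ is decided by a finite prefix of the canonical code). For Part (2), your M\"obius formula for the level-$n$ masses is the correct inversion of the prescribed hitting functional, and the key verification --- that nonnegativity of $q_n(\{L_t\})$ is precisely the alternating axiom applied to $Q_j=K_t\cup\llb\ell_j\rrb$ (monotonicity when $|L_t|=1$) --- is exactly where the hypothesis must be used; coherence by uniqueness of the finite representation and the clopen-to-closed passage via axiom (3) plus continuity from above are also sound. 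Two points you gloss that deserve a sentence each if this were written out: (a) that the masses sum to $1$ and reproduce the hitting functional is delegated to the classical finite Choquet theorem (fair, and you flag it), and (b) the transfer from the level-wise measure on dead-end-free trees to a \emph{computable} measure on the code space $\ccs$ needs the observation that each cylinder $\llb w\rrb\subseteq\ccs$ is a finite union of level-$n$ tree events, so its measure is a finite rational combination of the $p(t)$. For comparison, the construction in \cite{BCTW11} (as reflected in the computation in Section \ref{sec-online} of this paper) works directly on the canonical code space, assigning conditional probabilities $\nu(\sigma i\mid\sigma)$ node by node from capacity ratios; that is the same inclusion--exclusion core as yours, organized locally at each node rather than as level-$n$ marginals plus a Kolmogorov-type extension. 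Your version has the advantage of making the role of the alternating axiom and of uniqueness/coherence completely explicit; the code-space version yields the measure in the form the rest of the paper actually uses (values on cylinders of $\ccs$) with no transfer step.
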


For a given computable capacity $\T$, if $\mu^*$ is a computable measure on $\C(\cs)$ such that $\T=\T_\mu$, we will refer to $\mu^*$-random closed sets as the random closed sets associated to $\T$ and $\T$ as the capacity associated to the $\mu^*$-random closed sets.

\section{Symmetric Bernoulli Measures on $\F(\cs)$}\label{sec-bernoulli}

In this section, we consider continuous functions that are random with respect to some measure from a specific class of computable measures on $\ccs$.

\begin{definition} Let $\mu$ be a measure on $\ccs$.  
\begin{itemize}
\item[(i)] $\mu$ is a \emph{Bernoulli} measure if there are $p_0,p_1, p_2\in[0,1]$ such that $p_0+p_1+p_2=1$ and $\mu(\sigma^\frown i)=p_i\cdot\mu(\sigma)$ for each $i\in\{0,1,2\}$.
\item[(ii)] $\mu$ is a \emph{symmetric} Bernoulli measure if $\mu$ is a Bernoulli measure and there is some $r\in[0,1/2]$ such that $r=p_0=p_1$ (so that $p_2=1-2r$). \end{itemize}
\end{definition}

The symmetric Bernoulli measure with parameter $r\in[0,1/2]$ will be denoted $\mu_r$.  Note that $\mu_r$ is computable if and only if $r$ is a computable real number.  

We are interested in the behavior of the $\mu_r^{**}$-random continuous functions on $\cs$.  Note that in the case that $r=1/3$, $\mu_r$ is the uniform measure on $\ccs$ and the $\mu_r^{**}$-random continuous functions are the standard random continuous functions discussed in the previous section.  In fact, the results in this section generalize certain results from \cite{BCRW09} concerning $\mu_{1/3}^{**}$-random continuous functions.

First, it was shown in \cite{BCRW09} that every $\mu_{1/3}^{**}$-random continuous function is total.  However, if we allow the parameter $r$ to vary, which results in a change of the probability of the occurrence of delays (i.e., the occurrence of 2s), the situation becomes slightly more interesting.  Specifically, if $\mu_r$ is such that the probability of delay is greater than or equal to $1/2$, then not every $\mu_r^{**}$-random function will be total.

The following lemma will be needed. 

\begin{lemma} \label{lem:key}
Let $\mu_r$ be a symmetric Bernoulli measure on $\ccs$, let $A \subseteq \{0,1,2\}$, and let $p = \sum_{i \in A} p_i$, where $p_0=p_1=r$ and $p_2=1-2r$. Then the $\mu^{**}$-measure $q$ of the functions $F\in\F(\cs)$ such that there exists $x \in \cs$ with $f(x \uh n) \in A$ for all $n$ (where $f$ is the function representing $F$) equals 0 if $p \leq1/2$ and equals $\frac{2p-1}{p^2}$ if $p > 1/2$. 
\end{lemma}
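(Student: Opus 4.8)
The plan is to read this as a computation in Bernoulli site percolation on the full binary tree $\str$. By the definition of $\mu_r$ on $\ccs$ together with the coding of representing functions, under $\mu^{**}=\mu_r^{**}$ the values $f(\sigma)$, $\sigma\in\str\setminus\{\epsilon\}$, are mutually independent with $\Pr[f(\sigma)=i]=p_i$; hence, calling $\sigma$ \emph{open} when $f(\sigma)\in A$, each nonempty $\sigma$ is open independently with probability $p$. Thus $q$ is the probability that there is an $x\in\cs$ with $f(x\uh n)\in A$ for all $n\geq 1$. First I would record that this event is the same as the event that the set
\[
T_{\mathrm{open}}=\{\sigma\in\str:\ f(\tau)\in A\text{ for every }\tau\text{ with }\epsilon\neq\tau\prec\sigma\}
\]
is infinite: $T_{\mathrm{open}}$ is closed under prefixes, hence a finitely branching subtree of $\str$, and $x\uh n\in T_{\mathrm{open}}$ for all $n$ exactly when $f(x\uh m)\in A$ for all $m\geq 1$, so by König's lemma $q=\Pr[\,T_{\mathrm{open}}\text{ infinite}\,]$.

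Next I would exploit the self-similar structure. Fix $v\in\str$ with $|v|\geq 1$, and for $k\geq 1$ let $\gamma_k$ be the probability that within the subtree rooted at $v$ there is a chain of $k$ successive open nodes starting at $v$. Such a chain of length $k+1$ exists iff $v$ is open and at least one child of $v$ starts a chain of length $k$, so independence of the two child-subtrees gives $\gamma_1=p$ and $\gamma_{k+1}=g(\gamma_k)$ with $g(t)=p\bigl(1-(1-t)^2\bigr)=2pt-pt^2$. Since these events shrink with $k$, the limit $\alpha:=\lim_k\gamma_k$ is the probability that the subtree at $v$ contains an infinite open path issuing from $v$. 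As $g$ is non-decreasing on $[0,1]$ and $\gamma_2=g(p)=p^2(2-p)\leq p=\gamma_1$ (which is just $(p-1)^2\geq 0$), the sequence $(\gamma_k)$ is non-increasing, and so $\alpha=g(\alpha)$.

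The fixed points of $g$ in $[0,1]$ are $0$ and $\tfrac{2p-1}{p}$, and the latter lies in $(0,1]$ exactly when $p>\tfrac12$. If $p\leq\tfrac12$, the only fixed point in $[0,1]$ is $0$, so $\alpha=0$. If $p>\tfrac12$, then $\gamma_1=p\geq\tfrac{2p-1}{p}$ (again $(p-1)^2\geq 0$), and since $g$ is non-decreasing and fixes $\tfrac{2p-1}{p}$, induction gives $\gamma_k\geq\tfrac{2p-1}{p}$ for all $k$; hence $\alpha\geq\tfrac{2p-1}{p}>0$, forcing $\alpha=\tfrac{2p-1}{p}$. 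Finally, any infinite open path begins at node $0$ or node $1$, and the events ``there is an infinite open path from $0$'' and ``\dots\ from $1$'' depend on disjoint sets of labels, hence are independent, each of probability $\alpha$; therefore $q=1-(1-\alpha)^2$. This gives $q=0$ when $p\leq\tfrac12$, and $q=1-\bigl(\tfrac{1-p}{p}\bigr)^2=\tfrac{2p-1}{p^2}$ when $p>\tfrac12$.

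The step I expect to be the real obstacle is pinning down which of the two fixed points of $g$ equals $\alpha$ when $p>\tfrac12$ --- i.e.\ showing that percolation occurs with positive probability as soon as $2p>1$; this is where the phase transition lives, and the plan handles it by trapping the non-increasing sequence $\gamma_k$ above $\tfrac{2p-1}{p}$. The remaining ingredients --- that the pushforward measure makes the labels i.i.d., the König's-lemma reduction, and the final splitting into the two subtrees below the root --- should be routine.
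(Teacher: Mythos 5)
Your proof is correct and follows essentially the same route as the paper: a recursion for the probability of length-$n$ open chains, passage to the limit, the fixed-point equation $t=2pt-pt^2$, and the key induction trapping the decreasing sequence above the nonzero fixed point to resolve which fixed point is attained. The only difference is a trivial reparametrization (your $\gamma_k$ tracks chains including the starting node, so $\gamma\approx p\,q_n$, and you reassemble $q=1-(1-\alpha)^2$ at the root), which does not change the argument.
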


\begin{proof} It follows from the compactness of $\cs$ that there exists $x$ such that $f(x \uh n) \in A$ for all $n>0$ if and only if for every $n$, there exists $\sigma \in \{0,1\}^n$ such that $f(\sigma \uh m) \in A$ for all $0<m < n$.  Let $q_n$ be the probability that such $\sigma \in \{0,1\}^n$ exists. Then $q_0 = 1$, $q_{n+1} \leq q_n$ for all $n$, and $q = \lim_{n \to \infty} q_n$. 
Considering the cases of $f(i)$ for $i \in \{0,1\}$, we calculate that
\[
q_{n+1} = 2p q_n - p^2 q_n^2.
\]
Taking the limit of both sides, we see that $q = 2pq - p^2q^2$, so that either $q = 0$ or $q = \frac{2p-1}{p^2}$. In the case that $p < 1/2$, the latter is negative. Thus $q = 0$ if 
$p \leq 1/2$.  

For the other case, note first that $2pq_n - p^2 q_n^2 = 1 - (1 - pq_n)^2$, so that $q _n \geq x$ implies that $2pq_n - p^2 q_n^2  \geq 2px - p^2 x^2$. Let $s = \frac{2p-1}{p^2}$.
We now show by induction that $q_n \geq s$ for all $n$.  Initially we have $q_0 = 1 \geq s$. Now assuming that $q_n \geq s$, 
it follows that 
\[
q_{n+1} = 2pq_n - p^2 q_n^2  \geq 2p s - p^2 s^2 = s (2p - p^2s) = s (2p - (2p-1)) = s.
\]
Now suppose that $p > 1/2$, so that $s = \frac{2p-1}{p^2} > 0$. 
Since the sequence $(q_n)_{n \in \omega}$ is decreasing and $q_n \geq s$ for all $n$, it follows that the limit $q = \lim_n q_n \geq s$ and hence $q = s$.
\end{proof}

\begin{proposition}\label{prop:prob-partial}
Let $\mu_r$ be a symmetric Bernoulli measure on $\ccs$ for some $r\in[0,1/2]$.  Then the $\mu_r^{**}$-measure of the collection of partial continuous functions on $\cs$ is 0 if $r\geq 1/4$ and is 1 if $r<1/4$.
\end{proposition}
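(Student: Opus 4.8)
The plan is to reduce everything to Lemma~\ref{lem:key} with $A=\{2\}$. First I would translate the assertion that $F$ is partial into a statement about any representing function $f\colon\str\setminus\{\epsilon\}\to\{0,1,2\}$ of $F$. Since $F(x)$ is obtained by deleting the $2$'s from $f(x\uh 1),f(x\uh 2),\dots$, the value $F(x)$ is undefined exactly when only finitely many $f(x\uh n)$ differ from $2$, i.e.\ when $f(x\uh n)=2$ for all $n$ past some point; taking $\sigma$ to be the initial segment of $x$ just before that point shows that $F$ is a non-total partial function if and only if there is a string $\sigma$ and an $x\in\llb\sigma\rrb$ with $f(x\uh n)=2$ for all $n>|\sigma|$. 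Writing $P_\sigma$ for this event (with $\sigma$ fixed), this yields
\[
\{F\in\F(\cs): F\text{ is partial}\}=\bigcup_{\sigma\in\str}P_\sigma .
\]
The point of the reformulation is that $P_\epsilon$ is precisely the event measured in Lemma~\ref{lem:key} for $A=\{2\}$, and --- since $\mu_r^{**}$ makes the values $(f(\tau))_\tau$ i.i.d.\ and the subtree above any $\sigma$ is a copy of the whole tree --- the coordinate shift $f\mapsto(w\mapsto f(\sigma\fr w))$ is measure preserving and sends $P_\sigma$ to $P_\epsilon$, so $\mu_r^{**}(P_\sigma)=\mu_r^{**}(P_\epsilon)$ for every $\sigma$. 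By Lemma~\ref{lem:key} with $p:=p_2=1-2r$, this common value is $0$ when $p\le 1/2$, i.e.\ $r\ge 1/4$, and equals $\frac{2p-1}{p^2}>0$ when $p>1/2$, i.e.\ $r<1/4$.

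The case $r\ge 1/4$ is then immediate: every $P_\sigma$ has $\mu_r^{**}$-measure $0$, and there are only countably many $\sigma$, so $\mu_r^{**}\bigl(\bigcup_\sigma P_\sigma\bigr)=0$ by countable subadditivity.

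For $r<1/4$ the goal is that this union has measure $1$, and positivity of $\mu_r^{**}(P_\epsilon)$ is not by itself enough, since the events $P_\sigma$ are far from independent; the idea is to extract an infinite independent subfamily. I would use $\{P_{0^n1}:n\ge 0\}$: the event $P_{0^n1}$ depends only on the values of $f$ on the strict extensions of $0^n1$, and for distinct $n$ these index sets are pairwise disjoint (a strict extension of $0^n1$ has a $1$ in coordinate $n$, while a strict extension of $0^m1$ with $m>n$ has a $0$ there). As the coordinates of $f$ are independent under $\mu_r^{**}$, the events $P_{0^n1}$ are mutually independent, each of the fixed positive probability $c=\mu_r^{**}(P_\epsilon)$, so $\mu_r^{**}\bigl(\bigcup_n P_{0^n1}\bigr)=1-\prod_n(1-c)=1$; since $\bigcup_n P_{0^n1}\subseteq\bigcup_\sigma P_\sigma$, the collection of partial functions has $\mu_r^{**}$-measure $1$.

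I expect the main obstacle to be the bookkeeping in the first paragraph rather than any serious analysis: one must be careful that being partial genuinely requires quantifying over all $\sigma$ (a delaying branch anywhere forces non-totality but need not begin at the root), and that each $P_\sigma$ is genuinely a measure-preserving copy of the event in Lemma~\ref{lem:key}, given how $\mu_r^{**}$ arises from the coding of representing functions. The only conceptually new ingredient beyond Lemma~\ref{lem:key} is the independence trick with the strings $0^n1$, which converts a positive-probability delaying branch somewhere into an almost sure delaying branch.
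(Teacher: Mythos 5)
Your proof is correct, and its skeleton is the same as the paper's: translate partiality of $F$ into the existence of a string $\sigma$ and a branch above $\sigma$ on which the representing function is constantly $2$, use shift-invariance of the Bernoulli measure to reduce each such event $P_\sigma$ to the root event, and evaluate the root event by Lemma~\ref{lem:key} with $A=\{2\}$, $p=1-2r$; countable subadditivity then handles $r\geq 1/4$ exactly as in the paper (which leaves this step implicit). Where you diverge is the upgrade from positive measure to measure one when $r<1/4$. The paper gets this from a zero--one observation: $F$ is total iff its restrictions to $\llb 0\rrb$ and $\llb 1\rrb$ are total, and these two events are independent copies of the totality event, so the probability $p$ of totality satisfies $p=p^2$ and hence is $0$ or $1$. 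You instead exhibit the explicit mutually independent family $P_{0^n1}$, $n\geq 0$, supported on pairwise disjoint sets of coordinates, each of the same positive probability $c$, and conclude $\mu_r^{**}\bigl(\bigcup_n P_{0^n1}\bigr)=1-\prod_n(1-c)=1$. Both mechanisms are valid; the paper's self-similarity argument is shorter and reuses nothing beyond the product structure, while your independent-subfamily argument is more explicit (a second Borel--Cantelli-type computation) and avoids having to argue that totality of $F$ decomposes as the conjunction of two independent events of the same distribution. Your identification of each $P_\sigma$ with $P_\epsilon$ via the coordinate shift, and the disjointness of the coordinate sets for the strings $0^n1$, are both checked correctly.
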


\begin{proof}
First note that the measure must be either 0 or 1 in either case. This is because a function $F$ is total if and only if the restrictions of $F$ to both $\llb 0\rrb$ and $\llb1\rrb$ are total, so that if $p$ is the measure of the set of total functions, then $p = p^2$.  Next observe that the function represented by $f:\str\rightarrow \{0,1,2\}$  is partial if and only if there exists $x \in \cs$ and $n$ such that $f(x \uh m) = 2$ for all $m \geq n$. It  is enough to compute the probability $q$ that there exists $x$ such that $f(x \uh m) = 2$ for all $m>0$.  

Let $A=\{2\}$, so that $f(\sigma)\in A$ with probability $p=1-2r$ for each $\sigma\in\str\setminus\{\epsilon\}$.  Then by Lemma \ref{lem:key}, the $\mu^{**}$-measure of functions $F$ such that there exists $x\in\cs$ with $f(x \uh n) \in A$ for all $n>0$ equals 0 if $r\geq 1/4$ and equals $\frac{2p-1}{p^2}=\frac{1-4r}{(1-2r)^2}$ if $r < 1/4$.  Since for $r<1/4$, there are positive $\mu^{**}$-measure many functions $F$ for which such an $x$ exists, it follows that the collection of partial functions has $\mu^{**}$-measure 1.


\end{proof}


Next, it was also shown in \cite{BCRW09} that the probability that the range of a random continuous function includes a fixed $y\in\cs$ is equal to 3/4.  This was obtained by computing, for each $\sigma\in\str$ of length $n$, the probability $p_n$ that the range of a random continuous function has non-empty intersection with $\llb\sigma\rrb$ and then proving that $\lim_{n\rightarrow\infty}p_n=3/4$.  We consider the analogous result in the general case of a symmetric Bernoulli measure.

\begin{theorem}\label{thm:hitting-prob}
Let $\mu_r$ be a symmetric Bernoulli measure on $\ccs$ for some $r\in(0,1/2]$ and let $y\in\cs$.  Then the $\mu_r^{**}$-measure of the collection of continuous functions $F$ such that $y\in\ran(F)$ is equal to 
\[
\dfrac{1-2r}{(1-r)^2}.
\]
\end{theorem}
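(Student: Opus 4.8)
The plan is to reduce the computation to a one-dimensional recursion on the length of an initial segment of $y$, generalizing the argument for $r=1/3$ in \cite{BCRW09}, and then to analyze the limit of that recursion. Since $\ran(F)$ is the continuous image of the compact space $\cs$, it is closed, so $y\in\ran(F)$ if and only if $\llb y\uh n\rrb\cap\ran(F)\neq\emptyset$ for every $n$. These events are nested and decreasing in $n$, so by continuity of measure from above the quantity we want is $\lim_n p_n$, where $p_n$ is the $\mu_r^{**}$-measure of $\{F:\llb y\uh n\rrb\cap\ran(F)\neq\emptyset\}$. Next I would identify $p_n$ combinatorially: for a node $\sigma\in\str$ let $\rho_\sigma$ be the string obtained from $(f(\sigma\uh 1),\dots,f(\sigma))$ by deleting all $2$s, i.e.\ the part of the output of $F$ already committed along $\sigma$. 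Then $\llb y\uh n\rrb$ meets $\ran(F)$ precisely when some $\sigma$ with $\rho_\sigma=y\uh n$ extends to a point of $\dom(F)$, and the second clause is automatic almost surely: along any fixed branch the labels $f(\sigma\uh m)$ are i.i.d.\ and equal $2$ with probability $1-2r<1$ (this is where $r>0$ is used), so by Borel--Cantelli $F$ is defined on that branch with probability one, whence $\dom(F)\neq\emptyset$ a.s.; together with the self-similarity of $\mu_r^{**}$ (above any node the configuration is an independent copy of a $\mu_r^{**}$-random function) this gives $p_n=\mu_r^{**}(\{F:\exists\sigma\ \rho_\sigma=y\uh n\})$.

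Because $\mu_r$ is invariant under interchanging $0$ and $1$, $p_n$ depends only on $n$; write $h(n)=p_n$, so $h(0)=1$. To obtain the recursion I would condition on the pair $f(0),f(1)$. For $n\geq 1$ a witnessing node lies above $0$ or above $1$; the two subtrees are independent and identically distributed, and above $0$ one of three things happens: $f(0)=y(0)$ (probability $r$), leaving the task of producing $(y(1),\dots,y(n-1))$ in an independent copy and contributing $h(n-1)$; $f(0)=2$ (probability $1-2r$), leaving the full task and contributing $h(n)$; or $f(0)=1-y(0)$, a dead branch. Inclusion--exclusion over the two subtrees then gives
\[
h(n)=2u_n-u_n^2,\qquad u_n=r\,h(n-1)+(1-2r)\,h(n),
\]
an equation that is implicit in $h(n)$ because of the delay ($2$) case.

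For the endgame I would solve the implicit equation: $h(n)$ is the unique root in $[0,1]$ of $x\mapsto 2u(x)-u(x)^2-x$ with $u(x)=r\,h(n-1)+(1-2r)x$ (there is exactly one, since this quadratic opens downward, is $\geq 0$ at $x=0$ and $\leq 0$ at $x=1$), so $h(n)=x^{+}(h(n-1))$. One checks that $x^{+}$ is non-decreasing (the value of the quadratic is increasing in $h(n-1)$, as $\partial_a(2u-u^2)=2r(1-u)\geq 0$ with $u\leq 1-r<1$), that $x^{+}(1)<1$, and that $x^{+}(s)=s$ for $s=\frac{1-2r}{(1-r)^2}$, using the identity $2((1-r)t)-((1-r)t)^2-t=t\bigl((1-2r)-(1-r)^2 t\bigr)$. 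Hence, by induction, $(h(n))_n$ is non-increasing and bounded below by $s$; it therefore converges to some $h_\infty\geq s$, and passing to the limit in the recursion forces $h_\infty\bigl((1-2r)-(1-r)^2 h_\infty\bigr)=0$, i.e.\ $h_\infty\in\{0,s\}$. Since $h_\infty\geq s$, and $s=0$ exactly when $r=1/2$ (in which case $0$ is the only possible value anyway), we conclude $h_\infty=s$, so the probability equals $\frac{1-2r}{(1-r)^2}$, which is $3/4$ when $r=1/3$, recovering \cite{BCRW09}.

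The step I expect to be the main obstacle is handling the implicitness of the recursion: unlike the clean recursion $q_{n+1}=2pq_n-p^2 q_n^2$ of Lemma \ref{lem:key}, here $h(n)$ occurs on both sides (the delay case feeds back at the same problem size), so one must argue that $h(n)$ is the correct branch of a quadratic and, crucially, establish the monotone lower bound $h(n)\geq s$ by induction — in the spirit of the bound $q_n\geq s$ in the proof of Lemma \ref{lem:key} — before the limit can be evaluated. A secondary technical point is the measure-theoretic reduction of $p_n$ to the combinatorial count, that is, the almost-sure extendibility of a ``finished'' node into $\dom(F)$.
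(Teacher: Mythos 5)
The computational core of your argument is sound and genuinely different from the paper's. The paper reduces to $y=0^\infty$ by symmetry and applies Lemma \ref{lem:key} with $A=\{0,2\}$, i.e.\ a recursion over the depth of the input tree with the explicit update $q_{n+1}=2pq_n-p^2q_n^2$; you instead recurse on the length of the committed output prefix, getting the implicit equation $h(n)=2u_n-u_n^2$ with $u_n=r\,h(n-1)+(1-2r)h(n)$, and your handling of it (unique root in $[0,1]$, monotonicity of the root map, the fixed point $s=\frac{1-2r}{(1-r)^2}$, the inductive bound $h(n)\geq s$, passage to the limit) checks out, as does the identification $p_n=h(n)$ via the fact that almost surely every node extends into $\dom(F)$.

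The gap is in your very first reduction, $\mu_r^{**}(\{F: y\in\ran(F)\})=\lim_n p_n$. You justify the pointwise equivalence ``$y\in\ran(F)$ iff $\llb y\uh n\rrb\cap\ran(F)\neq\emptyset$ for all $n$'' by closedness of $\ran(F)$ as the continuous image of the compact space $\cs$, but that presupposes $F$ is total, and by Proposition \ref{prop:prob-partial}, for $r\in(0,1/4)$ almost every $F$ is partial, so $\ran(F)=F(\dom(F))$ need not be closed. Concretely, on the event $\bigcap_n\{\exists\sigma:\rho_\sigma=y\uh n\}$ the tree of $y$-consistent nodes is infinite, hence has an infinite path by K\"onig's lemma, but along every such path the labels could be eventually $2$, in which case the output along every $y$-consistent path is finite and $y\notin\ran(F)$ even though every $\llb y\uh n\rrb$ meets the range; you must show this scenario has probability zero. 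Your Borel--Cantelli remark does not do this: it handles the easier direction (a single finished node extends into the domain, where the labels above it are unconditioned), whereas here the labels along the $y$-consistent tree are exactly what the conditioning constrains. This is precisely where the paper's proof spends its second half: conditioned on $\C_F\neq\emptyset$, it argues via the leftmost path of $\C_F$ and the law of large numbers that the label $0$ occurs infinitely often along that path, so $0^\infty\in\ran(F)$ almost surely on that event. You need an analogous conditional argument (a supercritical branching-process or leftmost-path analysis of the $y$-consistent tree); as written, the case $r\in(0,1/4)$ of the theorem is not covered, while for $r\geq 1/4$ your proof is complete.
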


\begin{proof}
By symmetry of the measure $\mu_r$, it suffices to show that $\mu_r^{**}$-measure of the collection of continuous functions $F$ such that $0^\infty\in\ran(F)$ is equal to $\frac{1-2r}{(1-r)^2}$.  Let $A=\{0,2\}$, so that $f(\sigma)\in A$ with probability $p=1-r$ for $\sigma\in\str\setminus\{\epsilon\}$.  Then by Lemma \ref{lem:key}, the $\mu^{**}$-measure of functions $F$ such that there exists $x\in\cs$ with $f(x \uh n) \in A$ for all $n$ equals 0 if $r\geq 1/2$ and equals $\frac{2p-1}{p^2}=\frac{1-2r}{(1-r)^2}$ if $r < 1/2$.   

Note that even if a function $F$ satisfies $f(x\uh n)\in A$ for every $n>0$ for some $x\in\cs$, this does not guarantee that $0^\infty\in\ran(F)$, since we may have $f(x\uh n)=2$ for all but finitely many $n$.  For a given $F\in\F(\cs)$, let $\C_F=\{x\in\cs: (\forall n) f(x\uh n)\in A\}$.  One can verify that the probability that $0^\infty\in\ran(F)$, given that $\C_F$ is non-empty, is 1 as follows.  Suppose that $\C_F$ is non-empty.  Then if we consider the left-most path $x$ of $\C_F$, by the law of large numbers, as the occurrence of the label 0 on initial segments of $x$ is $\frac{r}{1-r}$, the limiting frequency of 0s along $x$ is $\frac{r}{1-r}$ with probability 1.  Since the $\mu_r^{**}$-measure of the collection of functions $F$ such that $\C_F$ is non-empty is $\frac{1-2r}{(1-r)^2}$, the conclusion follows.

\end{proof}

Observe that as $r$ approaches 0, the above probability approaches 1.  This means that as the probability of delay approaches 1, we have more chances to hit any given real, and so this probability approaches one.  However, for the value $r=0$, we have a discontinuity, as the resulting measure is concentrated on the function coded by $2^\infty$, which never outputs any bits but only delays indefinitely on every possible input.  Lastly, as $r$ approaches 1/2, the above probability approaches 0. In fact, this probability only attains the value 0 when $r=1/2$, that is, when the $\mu_r^{**}$-random functions have no delay.  Hereafter, we will refer to $\mu_{1/2}^{**}$-random functions as \emph{random online functions}, which we study in detail in Section \ref{sec-online}.

\section{From Functions to Capacities}\label{sec-functions-to-capacities}

The significance of the proof of Theorem \ref{thm:hitting-prob} is that it reveals a connection between a notion of random continuous function and a notion of effective capacity.  In particular, we have the following result.

\begin{theorem}\label{thm-fns-caps}
Let $\nu^{**}$ be a computable measure on $\F(\cs)$ and suppose that every $\nu^{**}$-random function is total.  Then the function
\[
\T(\S)=\nu^{**}(\{F\in\F(\cs):\ran(F)\cap\S\neq\emptyset\})
\]
is a computable capacity on $\C(\cs)$.
\end{theorem}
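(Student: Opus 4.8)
The plan is to verify the defining clauses of a computable capacity one by one, concentrating the real content in a single compactness observation about total functions. Write $A_Q := \{F \in \F(\cs) : \ran(F) \cap Q \neq \emptyset\}$, so $\T(Q) = \nu^{**}(A_Q)$, and note at the outset that since $\nu^{**}$ is computable and, by hypothesis, every $\nu^{**}$-random function is total, the set of non-total functions is $\nu^{**}$-null; every equivalence below between a condition on $\ran(F)$ and a condition on a representation $f$ of $F$ is asserted only for total $F$ and hence holds for $\nu^{**}$-almost every $F$. Three clauses are essentially formal: $A_\emptyset = \emptyset$ gives $\T(\emptyset) = 0$; $Q_1 \subseteq Q_2$ gives $A_{Q_1} \subseteq A_{Q_2}$, so $\T$ is monotone; and $\T(\cs) = \nu^{**}(\{F : \ran(F) \neq \emptyset\}) = 1$, since every total $F$ has nonempty range. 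For the alternating-of-infinite-order property, distributivity of $\cap$ over $\cup$ gives $A_{\bigcup_{i \in I} Q_i} = \bigcup_{i \in I} A_{Q_i}$ for every nonempty $I \subseteq \{1,\dots,n\}$, while a point common to $\ran(F)$ and to all of $Q_1,\dots,Q_n$ puts $F$ in every $A_{Q_i}$, so $A_{\bigcap_{i=1}^n Q_i} \subseteq \bigcap_{i=1}^n A_{Q_i}$; applying ordinary inclusion--exclusion to the complements $A_{Q_i}^c$ yields $\nu^{**}(\bigcap_{i=1}^n A_{Q_i}) = \sum_{\emptyset \neq I}(-1)^{|I|+1}\nu^{**}(\bigcup_{i \in I} A_{Q_i})$, and combining the two gives exactly $\T(\bigcap_i Q_i) \leq \sum_{\emptyset \neq I}(-1)^{|I|+1}\T(\bigcup_{i \in I} Q_i)$.

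That each $A_Q$ is $\nu^{**}$-measurable (so that $\T$ is well defined) and that $\T$ is continuous from above both follow from the fact that $\ran(F)$ is compact when $F$ is total. Given a closed $Q$, choose clopen $U_m \supseteq U_{m+1}$ with $\bigcap_m U_m = Q$; for total $F$, $\ran(F) \cap Q = \bigcap_m(\ran(F) \cap U_m)$ is a decreasing intersection of compact sets, hence is nonempty iff $\ran(F) \cap U_m \neq \emptyset$ for all $m$, so $A_Q = \bigcap_m A_{U_m}$ up to a $\nu^{**}$-null set. It therefore suffices that each clopen-level $A_U$ be measurable, and as $A_U$ is a finite union of sets $A_{\llb\sigma\rrb}$ it suffices that each $A_{\llb\sigma\rrb}$ be measurable; but for total $F$, $\ran(F) \cap \llb\sigma\rrb \neq \emptyset$ iff for some $n$ and some $\tau \in \{0,1\}^n$ the string obtained from $(f(\tau \uh 1),\dots,f(\tau \uh n))$ by deleting all $2$s extends $\sigma$ --- a $\Sigma^0_1$ condition on the code of $f$ --- so $A_{\llb\sigma\rrb}$ coincides, off a null set, with a c.e. open subset of $\ccs$ and is measurable. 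Continuity from above is then immediate: for $Q = \bigcap_n Q_n$ with $Q_{n+1} \subseteq Q_n$, taking $U_m = Q_m$ gives $A_Q = \bigcap_n A_{Q_n}$ (up to a null set) with $A_{Q_n}$ decreasing, so continuity of $\nu^{**}$ yields $\T(Q) = \lim_n \T(Q_n)$.

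The step I expect to require the most care is computability of $\T$ on clopen sets, and the crucial point is that for total $F$ the \emph{complement} of a hitting event is \emph{also} $\Sigma^0_1$ in the representation. Fix a clopen $U$, write $U^c = \bigcup_{l < L}\llb\rho_l\rrb$ as a disjoint union, and set $M = \max_l |\rho_l|$. I would show that for total $F$, $\ran(F) \subseteq U^c$ iff there is a level $n$ such that for every $\tau \in \{0,1\}^n$ the string of non-$2$ values among $f(\tau \uh 1),\dots,f(\tau \uh n)$ already has length $\geq M$ and extends some $\rho_l$: the ``$\Leftarrow$'' direction is immediate, since every $x$ extends such a $\tau$ and $F(x)$ extends the output of $F$ along $\tau$; the ``$\Rightarrow$'' direction holds because, $F$ being total, the prefix-free set of minimal $\tau$'s along which the output of $F$ has reached length $M$ is finite (K\"onig's lemma), so a common level $n$ works. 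Consequently $A_U$ and its complement agree, off the $\nu^{**}$-null set of non-total functions, with c.e. open subsets of $\ccs$ whose $\nu$-measures sum to $1$. Since $\nu$ is computable, the $\nu$-measure of a c.e. open set is lower semicomputable uniformly in an index for it; thus $\T(U) = \nu^{**}(A_U)$ is lower semicomputable and, being $1$ minus the measure of the complementary set, also upper semicomputable, uniformly in $U$, hence computable. This produces a computable map $\B \to [0,1]$ agreeing with $\T$ on clopen sets, which finishes the proof. Alternatively, one could phrase the whole argument as showing that $F \mapsto \ran(F)$ pushes $\nu^{**}$ forward to a computable measure $\mu^*$ on $\C(\cs)$ with $\T = \T_{\mu^*}$ and then invoke Theorem~\ref{thchc}(1); but verifying that $\mu^*$ is computable reduces to the same compactness analysis.
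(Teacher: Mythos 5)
Your proof is correct, but it takes a genuinely different route from the paper's. The paper does not verify the capacity axioms directly: it defines a partial Turing functional $\Phi$ sending a code of a representing function of a total $F$ to a code of $\ran(F)$, notes that $\Phi$ is defined $\nu$-almost everywhere precisely because every $\nu^{**}$-random function is total, pushes $\nu$ forward to a computable measure $\nu_\Phi$ on the code space of closed sets (citing a lemma of Bienvenu and Porter), identifies the ranges of $\nu^{**}$-random functions with the $\nu_\Phi^{*}$-random closed sets via preservation of randomness and the no-randomness-ex-nihilo principle, and then obtains both the capacity property and its computability in one stroke from the effective Choquet theorem, Theorem \ref{thchc}(1). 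You instead check the axioms by hand --- monotonicity and $\T(\emptyset)=0$ formally, the alternating inequality via inclusion--exclusion applied to the hitting events $A_Q$, continuity from above via compactness of $\ran(F)$ for total $F$ --- and establish computability on clopen sets directly by showing that for total $F$ both ``$\ran(F)$ meets $U$'' and ``$\ran(F)\subseteq U^c$'' are $\Sigma^0_1$ conditions on the code (the latter by K\"onig's lemma), so that $\T(U)$ is uniformly lower and upper semicomputable, hence computable; all of these steps are sound. What your route buys is self-containedness: it avoids Theorem \ref{thchc} and the preservation/ex-nihilo machinery, and it makes explicit the compactness analysis that the paper compresses into ``one can verify that $\Phi$ is defined on a set of $\nu$-measure one'' and into the computability of the pushforward measure. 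What the paper's route buys is brevity given the cited machinery and, more importantly, the identification of the ranges of $\nu^{**}$-random functions with a class of random closed sets, which is what the rest of the paper actually uses (e.g., in Proposition \ref{prop-functions-and-racs} and Theorem \ref{thm-capacity-racs}); your closing remark correctly observes that verifying computability of that pushforward reduces to the same compactness argument you carried out.
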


\begin{proof}
First we show that the map taking a $\nu^{**}$-random function to its range induces a computable measure on $\C(\cs)$.  Let $F$ be a $\nu^{**}$-random function.  Since $F$ is a continuous map from a compact space to a Hausdorff space, $F$ is a closed map.  By assumption, $F$ is total, and hence $\ran(F)=F(\cs)$ is a closed set.  Moreover, it is not hard to see that there is a (partial) Turing functional $\Phi:\ccs\rightarrow\ccs$ that, given a real in $\ccs$ that codes a representing function $f$ of some $\nu^{**}$-random function $F$, outputs a real that codes the range of $F$.  One can verify that $\Phi$ is defined on a subset of $\ccs$ of $\nu$-measure one.  It follows that $\Phi$ and $\nu$ together induce a computable measure $\nu_\Phi$ on $\ccs$ defined by
\[
\nu_\Phi(\X)=\nu(\Phi^{-1}(\X))
\]
for all measurable $\X\subseteq\ccs$ (see \cite[Lemma 2.6]{BP12}).  It follows from the preservation of randomness theorem (\cite[Theorem 3.2]{BP12}) that the image of a $\nu$-random real under $\Phi$ is a $\nu_\Phi$-random real.  In addition, by the no randomness ex nihilo principle (\cite[Theorem 3.5]{BP12}), every $\nu_\Phi$-random real is the image of a $\nu$-random real under $\Phi$.  Thus, it follows that the range of a $\nu^{**}$-random continuous function is a $\nu_\Phi^{*}$-random closed set and every $\nu_\Phi^{*}$-random is in the range of some $\nu^{**}$-random continuous function.

Thus we have
\[
\T(Q)=\nu^{**}(\{F\in\F(\cs):\ran(F)\cap Q\neq\emptyset\})=\nu_\Phi^{*}(\{C\in\C(\cs):\C\cap Q\neq\emptyset\})
\]
for every $Q\in\C(\cs)$.  By the Theorem \ref{thchc}, it follows that $\T$ is a computable capacity.
\end{proof}


%

In the proof of Theorem \ref{thm-fns-caps}, we showed that if $\nu^{**}$ is a computable measure on $\F(\cs)$ such that the $\nu^{**}$-random functions are total, then the ranges of the $\nu^{**}$-random functions yield a notion of random closed sets with respect to some computable measure $\nu^*_\Phi$ on $\C(\cs)$.   This raises the following question:  Is there a computable measure $\nu^{**}$ on $\F(\cs)$ such that the ranges of the $\nu^{**}$-random functions are the standard random closed sets?

We will provide a full answer to this question in Section \ref{sec-partial-online}, but as a first step, we prove the following.

\begin{proposition}\label{prop-functions-and-racs}
Let $\mu_r$ be a symmetric Bernoulli measure on $\ccs$ with $r\in(0,1/2)$.  Then the collection of ranges of the $\mu_r^{**}$-random functions is not the collection of standard random closed sets.
\end{proposition}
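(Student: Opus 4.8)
The plan is to exploit the necessary‑condition machinery for members of standard random closed sets together with the "hitting‑probability" computation of Theorem~\ref{thm:hitting-prob}. Recall (from \cite{BCDW07}) that a standard random closed set contains no $n$‑c.e.\ element, and in particular it never contains a computable real such as $0^\infty$. I would show instead that for any $r\in(0,1/2)$, there is positive $\mu_r^{**}$‑measure of random functions whose range does contain $0^\infty$; hence on a set of positive measure the range of a $\mu_r^{**}$‑random function is a closed set that cannot be a standard random closed set, so the two families of closed sets differ.

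First I would recall from Theorem~\ref{thm:hitting-prob} that the $\mu_r^{**}$‑measure of the collection of $F$ with $0^\infty\in\ran(F)$ equals $\frac{1-2r}{(1-r)^2}$, which is strictly positive for every $r\in(0,1/2)$. Next, to turn this measure statement into a statement about \emph{random} functions, I would note that the property "$0^\infty\in\ran(F)$" is, modulo the delay subtlety already addressed in the proof of Theorem~\ref{thm:hitting-prob}, (essentially) a $\Sigma^0_2$ event of positive computable measure, so by a standard argument there is a $\mu_r^{**}$‑random $F$ with $0^\infty\in\ran(F)$. Concretely, since $\C_F=\{x:(\forall n)\,f(x\uh n)\in\{0,2\}\}$ being nonempty is a $\Pi^0_1$ property of (a real coding) $f$ with positive computable $\mu_r$‑measure, a $\mu_r$‑random code lies in this positive‑measure $\Pi^0_1$ class for a suitable choice, and the law‑of‑large‑numbers argument in the proof of Theorem~\ref{thm:hitting-prob} shows that for such a random $F$ the leftmost path of $\C_F$ has limiting frequency of $0$s equal to $\frac{r}{1-r}<1$, forcing $0^\infty\in\ran(F)$ with probability one; hence some $\mu_r^{**}$‑random $F$ has $0^\infty\in\ran(F)$. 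Then $\ran(F)$ is a closed set (one checks $F$ is total for this $F$, or simply works with $F(\cs)$ on the relevant subtree) containing the computable real $0^\infty$, while every standard random closed set contains no computable real; therefore $\ran(F)$ is not a standard random closed set, and the collection of ranges of $\mu_r^{**}$‑random functions is not the collection of standard random closed sets.

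The main obstacle I anticipate is the passage from "positive measure" to "some random function witnesses it": one must be careful that the event $0^\infty\in\ran(F)$, rather than merely $\C_F\neq\emptyset$, is captured by an effective test of the right complexity, and that totality of the witnessing $F$ is not in doubt (here it is fine, since $r\in(0,1/2)$ gives $r>1/4$ only when $r\in(1/4,1/2)$, and for $r\le 1/4$ partial functions have full measure — so for small $r$ I would instead restrict attention to $F(\cs)$ as a closed set, or intersect with the positive‑measure set of functions that are total on the relevant branch). A clean way around this is to observe directly: the set of codes $z\in\ccs$ for which the induced $F$ has $0^\infty\in F(\cs)$ differs from the $\Pi^0_1$ class $\{z:\C_F\neq\emptyset\}$ by a $\mu_r$‑null set (those $z$ whose leftmost $A$‑path is eventually all $2$s), so a $\mu_r$‑random $z$ in the $\Pi^0_1$ class yields a $\mu_r^{**}$‑random $F$ with $0^\infty\in\ran F$; the remaining bookkeeping (that $\ran F$ really is closed, that $0^\infty$ is computable, and that standard random closed sets omit computable reals by \cite{BCDW07}) is routine.
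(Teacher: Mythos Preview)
Your approach is exactly the paper's: invoke Theorem~\ref{thm:hitting-prob} to see that a positive $\mu_r^{**}$-measure of functions have $0^\infty$ in their range, then cite \cite{BCDW07} for the fact that no standard random closed set contains a computable real. The obstacle you anticipate---passing from ``positive measure'' to ``some random witnesses it''---is not an obstacle at all: the set of $\mu_r$-random codes has full $\mu_r$-measure, so it meets every set of positive $\mu_r$-measure; no analysis of the descriptive complexity of the event ``$0^\infty\in\ran(F)$'' is needed, and the totality concern is likewise irrelevant once you have exhibited any $\mu_r^{**}$-random $F$ with $0^\infty\in\ran(F)$.
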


\begin{proof}
Let $r\in(0,1/2)$.  By Theorem \ref{thm:hitting-prob}, the $\mu_r^{**}$-measure of the collection of continuous functions $F$ such that $0^{\infty}\in\ran(F)$ is equal to 
$\dfrac{1-2r}{(1-r)^2}>0$.  However, as shown in \cite{BCDW07}, no standard random closed set contains a computable real, and thus the conclusion follows.
\end{proof}

A more significant difference between the collection of ranges of the $\mu_r^{**}$-random functions and the collection of standard random closed sets can be seen by considering the computable capacity associated to each of these two collections.  First, let $\mu$ be the uniform measure on $\ccs$. Then the capacity $\T_\mu(Q)$ on $\C(\cs)$ associated to the collection of standard random closed sets (see Theorem \ref{thchc}) can be shown to satisfy $\T(\llb\sigma\rrb)=\bigl(\frac{2}{3}\bigr)^n$ for every $n\in\omega$ and every $\sigma\in\str$ of length $n$.  Thus for $x\in\cs$, $T_\mu(\{x\})=\lim_{n\rightarrow\infty}T_\mu(\llb x\uh n\rrb)=0$.

Now suppose that $r\in(0,1/2)$.  Let $\nu=\mu_r$ and let $\T_r$ be the capacity from Theorem \ref{thm-fns-caps}.  Then as we proved $\T_r(\{x\})>0$ for every $x\in\cs$.   Thus, if we want to find a family of random functions such that the ranges of all such functions are the standard random closed sets, then we need the capacity $\T$ associated to this family to satisfy $\T(\{x\})=0$ for every $x\in\cs$.

One such candidate is the collection of $\mu_{1/2}^{**}$-random functions, for by Theorem \ref{thm-fns-caps}, in the case that $r=1/2$, we have $\T_r(\{x\})=0$ for every $x\in\cs$.  Is it the case that the ranges of the $\mu_{1/2}^{**}$-random functions are the standard random closed sets?  To answer this question, we will look more closely at the $\mu_{1/2}^{**}$-random functions.

%


%
%

\section{Random Online Functions}\label{sec-online}

In this section, we study the collection of functions that are random with respect to the measure $\mu^{**}_{1/2}$ induced by the symmetric Bernoulli measure $\mu_{1/2}$ on $\ccs$.  We will hereafter refer to the $\mu_{1/2}^{**}$-random functions as the \emph{random online functions} due to the absence of $2$'s in their codes in $\ccs$, which means that each bit given as input to such a function immediately (and randomly) yields one bit as output.  Given this absence of 2s, we can equivalently define a random online function to be given by a representing function $f:\str\setminus\{\epsilon\}\rightarrow\{0,1\}$.  In this case, each online function 
has precisely one representing function.  To see this, let $(\sigma_n)_{n\in\omega}$ be the canonical listing of $\str$ in length-lexicographical order.  Then given $x\in\cs$, we define a representing function $f_x$ such that $f_x(\sigma_{n+1})=x(n)$ for every $n\in\omega$.  One can readily verify that the function $F_X$ defined by
\[
F_x(y)=f_x(y\uh 1)^\frown f_x(y\uh 2)^\frown f_x(y\uh 3)^\frown\dotsc
\]
is an online function, and that every online function can be obtained in this way.  Thus, a function $F\in\F(\cs)$ is a random online function if and only if $F$ has a representing function $f$ coded by a Martin-L\"of random $x\in\cs$.

Note that by Proposition \ref{prop:prob-partial}, every random online function is total.  We establish several additional results.

\begin{theorem}
No computable real is in the range of a random online function.
\end{theorem}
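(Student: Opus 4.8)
The plan is to fix a computable real $y \in \cs$ and show that the set of random online functions $F$ with $y \in \ran(F)$ has $\mu_{1/2}^{**}$-measure zero; since this set is in fact a $\Pi^0_1(y)$-type class captured by a Martin-Löf test, membership of $y$ in $\ran(F)$ will be ruled out for every random online $F$. The key observation is that, because online functions have no delays, for each $x \in \cs$ the value $F(x) \uh n$ depends only on $f(x\uh 1), \dots, f(x\uh n)$, i.e., only on the $2^n - 1$ values of the representing function $f$ on strings of length $\le n$. So the event "$y \in \ran(F)$" is, by compactness of $\cs$, equivalent to "for every $n$ there exists $\sigma \in \{0,1\}^n$ with $f(\sigma\uh m) = y(m-1)$ for all $0 < m \le n$" — exactly the shape of event analyzed in Lemma \ref{lem:key}, but now with a \emph{single} target string $y\uh n$ at each level rather than an accepting set $A$.

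Concretely, I would run a recursion mirroring the proof of Lemma \ref{lem:key}. Let $q_n$ be the probability (under $\mu_{1/2}$ on the coding real in $\cs$) that there exists $\sigma \in \{0,1\}^n$ with $f(\sigma\uh m) = y(m-1)$ for all $0 < m \le n$. Branching on the two values $f(0), f(1) \in \{0,1\}$, each of which matches the prescribed bit $y(0)$ with probability $1/2$ independently, one gets the recurrence
\[
q_{n+1} = 2\cdot\tfrac12\, q_n - \bigl(\tfrac12\bigr)^2 q_n^2 = q_n - \tfrac14 q_n^2,
\]
with $q_0 = 1$ and $q_{n+1} \le q_n$. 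The limit $q = \lim_n q_n$ satisfies $q = q - \tfrac14 q^2$, forcing $q = 0$. Thus the set of online functions whose range contains $y$ is a $\mu_{1/2}^{**}$-null set; more is true, since $q_n \to 0$ effectively (in fact $q_n = O(1/n)$ from the recurrence), the sets $\mathcal{U}_k = \{F : \exists \sigma \in \{0,1\}^{n_k},\ f(\sigma\uh m) = y(m-1)\ \forall\, 0<m\le n_k\}$ for a suitably chosen computable sequence $n_k$ form a Martin-Löf test relative to $y$ (uniformly in $y$, and when $y$ is computable, an ordinary Martin-Löf test), and every $F$ with $y \in \ran(F)$ lies in all of them.

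The main obstacle — really the only subtle point — is the passage from "measure zero" to "no \emph{random} online function": I must verify that the approximating events are uniformly c.e. open in the code space $\cs$ and shrink at a computable rate, so that they genuinely constitute a Martin-Löf test, and that "$y\in\ran(F)$" implies membership in every level. The compactness argument gives the latter: if $y \in \ran(F)$, say $y = F(x)$, then for every $n$ the finite string $x\uh n$ witnesses the event at level $n$. The former requires only that $q_n$ be computable and tend to $0$ (immediate from the explicit recurrence) and that the event at level $n$ is clopen in the code $x \in \cs$ (it depends on finitely many bits of $x$), hence choosing $n_k$ with $q_{n_k} \le 2^{-k}$ yields the test. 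Specializing to computable $y$ completes the argument; I would also remark that the same computation shows $\T_{1/2}(\{y\}) = 0$ for every $y$, consistent with the discussion preceding this section.
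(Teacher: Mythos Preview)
Your proof is correct and is essentially the argument the paper has in mind: the paper merely cites a modification of Theorem~2.4 of \cite{BCRW09}, whose method is exactly the recursion-plus-test scheme you carry out, and your recurrence $q_{n+1}=q_n-\tfrac14 q_n^2$ coincides with the hitting-probability sequence $(p_n)$ the paper records immediately after the theorem. In other words, you have written out in full what the paper leaves implicit.
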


\begin{proof}
The proof can be obtained by modifying the proof of Theorem 2.4 from \cite{BCRW09}, according to which no standard random continuous function is partial.\end{proof}

\begin{corollary}
No random online function is onto.
\end{corollary}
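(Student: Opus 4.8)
The plan is to derive this as an immediate consequence of the preceding theorem, which asserts that no computable real lies in the range of a random online function. First I would fix a random online function $F$; by Proposition \ref{prop:prob-partial} (with $r = 1/2 \geq 1/4$) such an $F$ is total, so $\ran(F) = F(\cs)$ is a genuine (closed) subset of $\cs$, and the statement "$F$ is onto" simply means $\ran(F) = \cs$. Next I would invoke the previous theorem together with the trivial observation that computable reals exist — for instance $0^\infty \in \cs$ is computable — to conclude that $0^\infty \notin \ran(F)$. Hence $\ran(F) \neq \cs$, so $F$ is not onto; since $F$ was an arbitrary random online function, no random online function is onto.

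There is essentially no obstacle here: the entire content has already been extracted in the theorem just above, and the corollary only needs the remark that $\cs$ contains at least one computable point that the theorem then excludes from the range. The one bookkeeping point worth stating explicitly is that totality of random online functions (so that $\ran(F)$ is a well-defined closed set rather than a partial image) is needed to make "onto" meaningful, and this is already guaranteed by Proposition \ref{prop:prob-partial}.
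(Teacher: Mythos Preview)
Your proposal is correct and matches the paper's approach exactly: the paper states the corollary with no proof, treating it as immediate from the preceding theorem that no computable real lies in the range of a random online function. Your explicit mention of totality via Proposition~\ref{prop:prob-partial} is a harmless (and arguably helpful) bit of bookkeeping that the paper leaves implicit.
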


\begin{theorem}\label{thm-online-not-onto}
Let $F$ be a random online function and let $x\in\cs$ code the representing function of $F$.  If $y$ is Martin-L\"of random with relative to $x$, then $F^{-1}(\{F(y)\})$ is a standard random closed set.
\end{theorem}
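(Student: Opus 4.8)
The plan is to understand $F^{-1}(\{F(y)\})$ as a closed set whose canonical code in $\ccs$ (really $\cs$, since there are no delays) is obtained from $x \oplus y$ by a computable procedure, and then to invoke the machinery from \cite{BP12} (preservation of randomness and no randomness ex nihilo) exactly as in the proof of Theorem \ref{thm-fns-caps}, but now relativized so that the target measure is the uniform measure $\mu^*$ on $\C(\cs)$. First I would observe that since $F$ is an online function, the value $F(z)\uh n$ depends only on $z \uh n$; writing $f$ for the (unique) representing function of $F$, we have $f: \str\setminus\{\epsilon\}\to\{0,1\}$, and $F(z)(n-1) = f(z\uh n)$. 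Set $w = F(y)$. Then for a string $\sigma$ of length $n$, we have $\sigma \in T_{F^{-1}(\{w\})}$ iff there is $z \succ \sigma$ with $F(z) = w$, which by König's lemma and continuity is equivalent to: for every $m \ge n$ there is $\tau \succeq \sigma$ of length $m$ with $f(\tau\uh i) = w(i-1)$ for all $i$ with $n < i \le m$. The key point is that this tree is determined, level by level, by $x$ and $w$: given $\sigma \in T$ of length $n$, the string $\sigma\fr j$ (for $j \in \{0,1\}$) lies in $T$ iff $f(\sigma\fr j) = w(n)$ and the subtree above $\sigma\fr j$ is infinite — and whether $f(\sigma\fr j) = w(n)$ is decided by one bit of $x$, while "infinite subtree above" is again a condition on $x$ and $w$. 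So there is a Turing functional $\Psi$ with $\Psi(x \oplus w) = x_{F^{-1}(\{w\})}$, the canonical code of the closed set.

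Next I would make the branching probabilities explicit. Relative to $x$, the random real $y$ is Martin-Löf random for the uniform measure; and $w = F(y)$ is its image under the (computable-in-$x$) online map $F$, which is measure-preserving on the relevant part of the space in the sense that, conditioned on the tree $T_{F^{-1}(\{w\})}$ having a given node $\sigma$ of length $n$, each of the two potential children $\sigma\fr 0, \sigma\fr 1$ is labeled $w(n)$ independently with probability $1/2$ (because the bits $f(\sigma\fr 0)$ and $f(\sigma\fr 1)$ are independent fair coins coming from distinct coordinates of the uniformly random $x$). Hence, for a node $\sigma \in T$, the three cases "both children in $T$", "only left child in $T$", "only right child in $T$" each occur with probability $1/3$ — this is precisely the branching rule for the uniform measure $\mu^*$ on $\C(\cs)$ from \cite{BCDW07}. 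Making this rigorous amounts to checking that the functional $z \mapsto \Psi(x \oplus F(z))$, viewed as a map on $\cs$ equipped with the uniform measure, pushes forward the uniform measure to the measure $\mu^*$ on codes of closed sets (relativized to $x$); this is the content I would verify by the level-by-level independence argument just sketched, together with the observation that the "infinite subtree" side-condition is, with probability one, automatically satisfied at nodes that survive — i.e., the tree is almost surely a tree without dead ends whose code is genuinely the canonical one.

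Finally, having identified $\nu_\Psi = \mu^*$ (relative to $x$), I would conclude: since $y$ is ML-random relative to $x$, the preservation-of-randomness theorem (\cite[Theorem 3.2]{BP12}, relativized to $x$) shows $\Psi(x\oplus F(y)) = x_{F^{-1}(\{F(y)\})}$ is $\mu^*$-random relative to $x$, hence $\mu^*$-random, hence $F^{-1}(\{F(y)\})$ is a standard random closed set. (I note $F^{-1}(\{F(y)\})$ is nonempty, since $y$ itself is in it, so there is no degenerate case.) The main obstacle I anticipate is the second step — carefully verifying the pushforward claim, in particular that the two potential children of a surviving node are labeled independently and fairly, and that the "infinite subtree" condition does not distort the branching probabilities (one must rule out, almost surely, the pathology where a node has a correctly-labeled child that nonetheless spawns only a finite subtree, which would make the conditional probabilities deviate from $1/3$). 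This should follow from a Borel–Cantelli / law-of-large-numbers estimate analogous to the one used in the proof of Theorem \ref{thm:hitting-prob}, showing that below any correctly-labeled node the surviving subtree is infinite with conditional probability one. Once that is in hand, the identification $\nu_\Psi = \mu^*$ is immediate and the rest is a direct application of the cited transfer principles.
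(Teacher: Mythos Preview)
Your plan is the paper's plan: define a Turing functional sending $x\oplus y$ to the canonical code of $F^{-1}(\{F(y)\})$, claim that the pushforward of Lebesgue measure is the uniform measure on $\ccs$, and finish with van Lambalgen plus preservation of randomness. The only difference is cosmetic --- the paper applies van Lambalgen first to get $x\oplus y\in\MLR$ and then invokes unrelativized preservation, whereas you relativize to $x$ throughout; these are equivalent formulations.

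The substantive gap is in the pushforward step, and it is precisely the obstacle you flag at the end --- but it is not removable by the Borel--Cantelli estimate you propose. Your branching heuristic (each child of $\sigma\in T$ correctly labeled independently with probability $1/2$, hence the three outcomes occur with probability $1/3$ each after conditioning on survival) already fails along the path $y$: there the child $y\uh(n+1)$ is \emph{deterministically} correctly labeled, since $f(y\uh(n+1))=w(n)$ by the very definition of $w=F(y)$. Off the path your heuristic is correct at the level of the unpruned tree $T'$ of correctly-labeled nodes, but then the subtree of $T'$ above any off-path node is a Galton--Watson process with $\mathrm{Binomial}(2,\tfrac12)$ offspring: the two child labels $f(\sigma\fr 0),f(\sigma\fr 1)$ and the shared target $f(y\uh(|\sigma|+1))$ are three distinct, hence independent, fair bits of $x$, and one checks directly that the two events ``$f(\sigma\fr j)=f(y\uh(|\sigma|+1))$'' are independent Bernoulli$(\tfrac12)$. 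This process has mean offspring $1$, so it is critical, so it goes extinct almost surely (the extinction probability solves $(1+q)^2/4=q$, giving $q=1$). Since there are only countably many first-off-path nodes (one sibling of $y\uh n$ for each $n$), a countable intersection of probability-one events shows that almost surely over $(x,y)$ \emph{every} off-path subtree dies out, whence $F^{-1}(\{F(y)\})=\{y\}$. The pushforward therefore concentrates on codes of singletons rather than on the uniform measure on $\ccs$, and the argument as outlined --- both yours and the paper's sketch --- does not establish the stated conclusion.
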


\begin{proof}[Sketch]
We define a map $\Theta:\cs\rightarrow\ccs$ such that maps the join of two reals $x\oplus y\in\cs$ to some $z\in3^\omega$, where $x$ is the code of the representing function of a random online function and $z$ is a code of the closed set $F^{-1}(\{F(y)\})$.  One can verify that $\Theta$ induces the uniform measure on $\ccs$.  Given $y\in\MLR^x$, by van Lambalgen's theorem (see \cite[Theorem 6.9.1]{DH11}) the real $x\oplus y$ is random, and hence by the preservation of randomness theorem, $\Theta(x\oplus y)=z$ is random with respect to the measure induced by $\Theta$, namely the uniform measure on $\cs$, which establishes the theorem.
\end{proof}

\begin{corollary}
No random online function is one-to-one.
\end{corollary}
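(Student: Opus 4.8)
The plan is to leverage Theorem~\ref{thm-online-not-onto}, which tells us that for a random online function $F$ and a code $x$ of its representing function, whenever $y \in \MLR^x$ the preimage $F^{-1}(\{F(y)\})$ is a standard random closed set. The key observation is that a standard random closed set is never a singleton: as recalled in the introduction, every random closed set (in the sense of \cite{BCDW07}) is perfect, and in particular has more than one element. So if $F$ were one-to-one, then for any such $y$ the set $F^{-1}(\{F(y)\})$ would be exactly $\{y\}$, contradicting the fact that this set is a standard random closed set and hence perfect.

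First I would fix a random online function $F$ and let $x \in \cs$ code its (unique) representing function. Next I would note that $\MLR^x$ is nonempty---indeed it has uniform measure one---so we may choose some $y \in \MLR^x$. Then I would apply Theorem~\ref{thm-online-not-onto} to conclude that $F^{-1}(\{F(y)\})$ is a standard random closed set, hence perfect, hence contains at least two distinct points $y_0, y_1$. Since $F(y_0) = F(y) = F(y_1)$ with $y_0 \neq y_1$, the function $F$ is not injective.

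There is essentially no obstacle here; the corollary is an immediate consequence of the preceding theorem once one recalls that random closed sets are perfect. The only thing to be slightly careful about is the logical form: one needs that the preimage being a random closed set forces it to be non-singleton, which is exactly the perfectness (or even just nonemptiness together with having no isolated points). One could alternatively phrase the argument via cardinality---random closed sets are uncountable, as noted for the standard random continuous functions' images---but perfectness is the cleanest hammer. Since $F$ is total by Proposition~\ref{prop:prob-partial}, there is no subtlety about the domain of $F$ either, so the one-line argument above suffices.
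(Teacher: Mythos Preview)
Your proposal is correct and matches the paper's own proof essentially line for line: pick $y\in\MLR^x$, apply Theorem~\ref{thm-online-not-onto} to get that $F^{-1}(\{F(y)\})$ is a standard random closed set, and invoke perfectness from \cite{BCDW07} to conclude $F$ is not one-to-one. The paper phrases the existence of such a $y$ via totality of $F$ rather than via $\MLR^x$ having measure one, but this is a cosmetic difference.
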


\begin{proof}
Given a random online function $F$, let  let $x\in\cs$ code the representing function of $F$.  Since $F$ is total, $F$ is defined on some $y$ that is Martin-L\"of random relative to $x$.  Then by Theorem \ref{thm-online-not-onto}, $F^{-1}(\{F(y)\})$ is a random closed set, which is perfect (as shown in \cite{BCDW07}).  Thus $F$ is not one-to-one.
\end{proof}

By Theorem \ref{thm:hitting-prob}, for a fixed $y\in\cs$, the probability that a random online function will have $y$ in its range is 0.  In fact, if for each $n$ we let $p_n$ be the probability that a random online function hits $\llb\sigma\rrb$ for a fixed $\sigma$ of length $n$ (where $F$ \emph{hits} $\llb\sigma\rrb$ if $\ran(F)\cap\llb\sigma\rrb\neq\emptyset$), by considering the cases of $f(i)$ for $i \in \{0,1\}$, one can show that 
\begin{itemize}
\item[(i)] $p_1=3/4$, and
\item[(ii)] $p_{n+1}=p_n(1-\frac{1}{4}p_n)$.
\end{itemize}
Moreover, one can verify that $\lim_{n\rightarrow\infty}p_n=0$ for each $n\geq 1$.  Hereafter, we will refer to the $p_i$'s as \emph{hitting probabilities}.

Using the notation of the previous section, it follows that $\T_{1/2}(\sigma)=p_n$ for every $n$ and every $\sigma$ of length $n$.  We can use this fact to determine the computable measure $\nu$ on $\ccs$ with the property that the $\nu^*$-random closed sets are precisely the ranges of random online functions.  Following the proof of the effective Choquet capacity theorem from \cite{BCTW11} to find the values of $\nu$, the key observation to make is that for each $n\in\omega$ and each $\sigma\in \str$ of length $n$,
\[
\nu(\sigma2\mid\sigma)=2\Bigl(\frac{p_n}{p_{n-1}}\Bigr)-1=2(1-\frac{1}{4}p_n)-1=1-\frac{1}{2}p_n
\]
for $n\geq 1$ (where $p_0=1$).  Here $\nu( \sigma i\mid\sigma)$ is the probability, under $\nu$,  that a random function $F$ hits $\llb\sigma i\rrb$ given that $F$ hits $\llb\sigma\rrb$. For each such $\sigma$, we thus have $\nu(\sigma 0\mid\sigma)=\nu(\sigma 1\mid\sigma)=\frac{1}{4}p_n$.  Since $\lim_{n\rightarrow\infty}p_n=0$, $\nu(\sigma 2\mid \sigma)$ approaches 1 while $\nu(\sigma 0\mid\sigma)$ and $\nu( \sigma 1\mid\sigma)$ both approach 0 as we consider longer and longer strings $\sigma$.  Thus one can prove:

\begin{theorem}\label{thm-online-range}
For each random online function $F$, the range of $F$ is not a standard random closed set.
\end{theorem}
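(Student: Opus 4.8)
The plan is to argue by contradiction. Suppose $F$ is a random online function and $\ran(F)$ is a standard random closed set, and let $x\in\ccs$ be the canonical code of $\ran(F)$. By definition of the standard random closed sets, $x$ is \ml random with respect to the uniform measure $\mu$ on $\ccs$, i.e.\ the Bernoulli measure with $\mu(\sigma i\mid\sigma)=\tfrac13$ for each $i\in\{0,1,2\}$. On the other hand, every $\mu_{1/2}^{**}$-random function is total (Proposition~\ref{prop:prob-partial}), so Theorem~\ref{thm-fns-caps} applies with $\nu^{**}=\mu_{1/2}^{**}$: in the notation of its proof $\ran(F)$ is a $\nu_\Phi^{*}$-random closed set, which is to say that $x$ is \ml random with respect to the computable measure $\nu:=\nu_\Phi$ on $\ccs$. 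The computation carried out just before the statement identifies the conditional probabilities of $\nu$: for a tree-node $\sigma$ of length $n$ one has $\nu(\sigma 2\mid\sigma)=1-\tfrac12 p_n$ and $\nu(\sigma 0\mid\sigma)=\nu(\sigma 1\mid\sigma)=\tfrac14 p_n$, where the hitting probabilities satisfy $p_1=\tfrac34$, $p_{n+1}=p_n(1-\tfrac14 p_n)$, and hence $p_n\to 0$. So the entire task reduces to showing that no real is \ml random for both $\mu$ and $\nu$; concretely, I will show that the $\nu$-random real $x$ cannot be $\mu$-random.

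For this I would play the likelihood-ratio martingale of $\nu$ against $\mu$. Put $d_n(y)=\nu(y\uh n)/\mu(y\uh n)$ (and $d_n(y)=0$ when $\nu(y\uh n)=0$); a one-line computation shows that $d$ is a $\mu$-martingale with $d_0=1$, and it is a \emph{computable} real-valued martingale since $\mu,\nu$ are computable. By Ville's inequality the sets $\{y:\sup_n d_n(y)>2^k\}$ form a $\mu$-\ml test, so any $y$ with $\limsup_n d_n(y)=\infty$ is not $\mu$-random; hence it suffices to prove $d_n(x)\to\infty$. Taking logarithms, $\log d_n(x)=\sum_{j<n}\log\!\bigl(3\,\nu(x(j)\mid x\uh j)\bigr)$. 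Reading $x\uh j$ one recovers the length $m_j$ of the $j$-th tree node, and the conditional $\nu$-distribution of $x(j)$ is $\bigl(\tfrac14 p_{m_j},\tfrac14 p_{m_j},1-\tfrac12 p_{m_j}\bigr)$; since $\ran(F)$ is a nonempty closed set, $m_j\to\infty$, so the conditional $\nu$-expectation of the $j$-th summand is $D_j:=D_{\mathrm{KL}}\!\bigl(\bigl(\tfrac14 p_{m_j},\tfrac14 p_{m_j},1-\tfrac12 p_{m_j}\bigr)\,\big\|\,(\tfrac13,\tfrac13,\tfrac13)\bigr)\to D_{\mathrm{KL}}\!\bigl((0,0,1)\,\|\,(\tfrac13,\tfrac13,\tfrac13)\bigr)=\log 3>0$. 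Writing $S_n(x)=\log d_n(x)-\sum_{j<n}D_j$, the sequence $S_n$ is a mean-zero $\nu$-martingale whose $j$-th increment has conditional variance $v(m_j)$ for a computable function $v$ with $v(m)\to 0$, so $C:=\sup_m v(m)<\infty$ is computable and $\mathrm{Var}_\nu(S_n)\le Cn$.

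It remains to turn the strong law "$S_n/n\to 0$ $\nu$-almost everywhere" into a statement about the specific $\nu$-random $x$. I would use Doob's $L^2$-maximal inequality on the dyadic blocks $[2^k,2^{k+1})$, which gives $\nu\bigl(\max_{m<2^{k+1}}|S_m|>\varepsilon 2^k\bigr)\le 8C\varepsilon^{-2}2^{-k}$; summing the tails, for each rational $\varepsilon>0$ the set $\{y:|S_n(y)|/n>\varepsilon\text{ for infinitely many }n\}$ is contained in a $\nu$-\ml test and hence avoided by $x$, so $S_n(x)/n\to 0$. Then $\tfrac1n\log d_n(x)=\tfrac1n S_n(x)+\tfrac1n\sum_{j<n}D_j\to 0+\log 3>0$, so $d_n(x)\to\infty$, contradicting the $\mu$-randomness of $x$.

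I expect the one genuinely technical point to be this last step — an \emph{effective} strong law for the martingale remainder $S_n$ — because $\nu$ is neither a Bernoulli nor a stationary Markov measure and the usual effective law of large numbers cannot be invoked verbatim; by contrast the drift identity, the variance bound, and the fact that an unbounded computable martingale along $x$ witnesses non-randomness of $x$ are all routine. A more combinatorial variant, perhaps better aligned with the rest of the paper, would instead show directly that $\nu$-randomness of $x$ forces $|T_{\ran(F)}\cap\{0,1\}^n|\ge(2-\varepsilon)^n$ for all large $n$ (each node branching with conditional probability $1-\tfrac12 p_n\to 1$), hence that $\ran(F)$ has box dimension $1$, which already contradicts the fact from \cite{BCDW07} that every standard random closed set has box dimension $\log_2\tfrac43<1$; the obstacle there is the same, namely an effective concentration estimate for the supercritical branching process $n\mapsto|T_{\ran(F)}\cap\{0,1\}^n|$.
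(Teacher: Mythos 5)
Your proposal is correct in outline, but it runs the likelihood-ratio argument in the opposite direction from the paper, and that costs you the one piece of technology the paper manages to avoid. The paper works with the same measure $\nu$ on $\ccs$ (determined by $\nu(\sigma 2\mid\sigma)=1-\tfrac12 p_n$ and $\nu(\sigma 0\mid\sigma)=\nu(\sigma 1\mid\sigma)=\tfrac14 p_n$), but it considers $\mu/\nu$, a computable $\nu$-martingale, and evaluates it along a $\mu$-random code $x$: the only probabilistic input is the standard effective law of large numbers for the \emph{uniform} measure, namely that along a $\mu$-random $x$ the symbol $2$ occurs with limiting frequency $1/3$ and the symbols $0,1$ with frequency $2/3$, while at depths where $p_n\leq 2^{-k}$ each occurrence of $0$ or $1$ multiplies $\mu(x\uh n)/\nu(x\uh n)$ by at least $2^k$ and an occurrence of $2$ costs at most a factor $1/3$; hence the ratio diverges and no $\mu$-random real is $\nu$-random --- the contrapositive of exactly what you prove. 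Your version plays $\nu/\mu$ as a $\mu$-martingale along the $\nu$-random code instead, which is logically equivalent but forces you to establish an effective strong law for the non-stationary sequence of log-likelihood increments under $\nu$; your plan for this (the drift $D_j\to\log 3$, the vanishing conditional variances, Doob's $L^2$ maximal inequality over dyadic blocks assembled into a $\nu$-Solovay/\ml test) is sound, and your KL and variance computations are correct, so the remaining work is of the ``write out the estimate'' kind rather than a missing idea --- but it is genuinely more machinery than the paper needs. Both arguments lean equally on the identification, made in the discussion preceding the theorem, of the push-forward of $\mu_{1/2}^{**}$ under the range map with the measure $\nu$ determined by the hitting probabilities (so that the code of $\ran(F)$ is $\nu$-random by preservation of randomness); neither your sketch nor the paper's proof re-derives this, so you are on equal footing there. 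In short: the paper's direction buys a short, elementary proof resting on a known effective LLN; yours buys a template that would also work when the reference measure is not i.i.d., and your closing alternative via box dimension $\log_2\tfrac43$ is a viable third route, subject to the same concentration estimate you already flagged.
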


\begin{proof}
Let $\mu$ be the uniform measure on $\ccs$ and let $\nu$ be the measure on $\ccs$ as defined above.  Then one can verify that $\mu/\nu$ is a computable $\nu$-martingale on $\ccs$, where $d:\str\rightarrow [0,+\infty)$ is a $\nu$-martingale on $\ccs$ if
\[
\nu(\sigma)d(\sigma)=\nu(\sigma0)d(\sigma0)+\nu(\sigma1)d(\sigma1)+\nu(\sigma2)d(\sigma2).
\]
\noindent Given a $x\in\ccs$, for each $n\geq 0$ we can write
\[
\frac{\mu\bigl(x\uh (n+1)\bigr)}{\nu\bigl(x\uh (n+1)\bigr)}=\frac{\mu\bigl(x\uh (n+1)\mid x\uh n\bigr)}{\nu\bigl(x\uh (n+1)\mid x\uh n\bigr)}\frac{\mu(x\uh n)}{\nu(x\uh n)},
\]
 Since $\lim_{n\rightarrow\infty}p_n=0$, for each $k$, there is some $n_k$ such that $p_{n_k}\leq 2^{-k}$.  Then for any $\sigma$ of length greater than $n_k$, we have $1\geq\nu(\sigma2\mid \sigma)\geq1-2^{-(k+1)}$ and $\nu(\sigma0\mid\sigma)=\nu(\sigma1\mid\sigma)\leq 2^{-(k+2)}$.   If $x\in\ccs$ is $\mu$-random, then for each $n\geq n_k$ such that $x(n)=2$, which happens roughly 1/3 of the time, we have
\[
\frac{\mu\bigl(x\uh (n+1)\bigr)}{\nu\bigl(x\uh (n+1)\bigr)}=\frac{1/3}{\nu\bigl({(x\uh n)}^\frown 2\mid x\uh n\bigr)}\frac{\mu(x\uh n)}{\nu(x\uh n)}\geq1/3\frac{\mu(x\uh n)}{\nu(x\uh n)},
\]
For each $n\geq n_k$ such that $x(n)=0$ or $x(n)=1$, which happens roughly 2/3 of the time, we have for $i=0,1$,
\[
\frac{\mu\bigl(x\uh (n+1)\bigr)}{\nu\bigl(x\uh (n+1)\bigr)}=\frac{1/3}{\nu\bigl({(x\uh n)}^\frown i\mid x\uh n\bigr)}{\nu(x\uh n)}\geq\frac{1/3}{2^{-(k+2)}}\frac{\mu(x\uh n)}{\nu(x\uh n)}\geq 2^k\frac{\mu(x\uh n)}{\nu(x\uh n)}.
\]
One can verify that $\lim_{n\rightarrow\infty}\frac{\mu(x\uh n+1)}{\nu(x\uh n+1)}=\infty$ for every $\mu$-random $x\in\ccs$.  It is well-known that this implies that no such $x$ can be $\nu$-random, and the conclusion follows.
\end{proof}

It is reasonable to ask which reals are in the range of some random online function.  We give a partial answer to this question by providing a necessary condition for being a member of the range of some random online function.  We first prove a more general result, which is an extension of a result in \cite{DK12}, according to which every member of a standard random closed set must have sufficiently high effective Hausdorff dimension.  Recall that $K(\sigma)$ is the prefix-free Kolmogorov complexity of $\sigma$.

\begin{theorem}\label{thm:isc-capacity}
Let $\mu^*$ be a computable measure on  $\C(\cs)$ and $\T_\mu$ the computable capacity associated to $\mu$.
If $x$ is a member of some $\mu^*$-random closed set, then there is some $c$ such that
\[
K(x\uh n)\geq-\log\T_\mu(\llb x\uh n\rrb)-c
\]
for all $n$.
\end{theorem}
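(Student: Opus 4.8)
The plan is to turn the claimed inequality into the statement that a certain uniformly effective sequence of open sets is a $\mu^*$-Martin-L\"of test, and then to apply randomness of a closed set containing $x$. For a string $\sigma$, write $U_\sigma=\{\X\in\C(\cs):\X\cap\llb\sigma\rrb\neq\emptyset\}$. Two observations do all the work. First, by the very definition of $\T_\mu$ we have $\mu^*(U_\sigma)=\T_\mu(\llb\sigma\rrb)$. Second, whether a closed set $\X$ lies in $U_\sigma$, i.e. whether $\sigma\in T_\X$, is decided by a finite initial segment of the canonical code $x_\X$, with a length bound computable from $|\sigma|$; hence $U_\sigma$ is, uniformly in $\sigma$, an effectively clopen subset of $\C(\cs)$.

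Next I would define, for each $d\in\omega$,
\[
V_d=\bigcup\bigl\{\,U_\sigma : \sigma\in\str \text{ and } K(\sigma)<-\log\T_\mu(\llb\sigma\rrb)-d\,\bigr\}.
\]
Rewriting the condition on $\sigma$ as $2^{-K(\sigma)}>2^{d}\,\T_\mu(\llb\sigma\rrb)$ and using that $K$ is upper semicomputable while $\T_\mu$ is computable on clopen sets, one sees that this is a $\Sigma^0_1$ condition uniformly in $d$, so $(V_d)_{d\in\omega}$ is a uniform sequence of c.e.\ open subsets of $\C(\cs)$. Each $\sigma$ contributing to $V_d$ satisfies $\T_\mu(\llb\sigma\rrb)<2^{-K(\sigma)-d}$, so countable subadditivity together with the Kraft inequality $\sum_\sigma 2^{-K(\sigma)}\leq 1$ gives $\mu^*(V_d)\leq 2^{-d}\sum_\sigma 2^{-K(\sigma)}\leq 2^{-d}$. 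Thus $(V_d)_{d\in\omega}$ is a $\mu^*$-Martin-L\"of test.

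To finish, let $x$ be a member of a $\mu^*$-random closed set $Q$. Since $Q\in U_{x\uh n}$ for every $n$, no $U_{x\uh n}$ is $\mu^*$-null, so $\T_\mu(\llb x\uh n\rrb)>0$ and the right-hand side of the claimed inequality is finite. As $Q$ is $\mu^*$-random there is some $d$ with $Q\notin V_d$; for this $d$ and every $n$, the membership $Q\in U_{x\uh n}$ forces $U_{x\uh n}$ not to appear in the union defining $V_d$, i.e. $K(x\uh n)\geq-\log\T_\mu(\llb x\uh n\rrb)-d$. Taking $c=d$ completes the argument, and specializing to the standard measure (where $\T_\mu(\llb\sigma\rrb)=(2/3)^{|\sigma|}$) recovers the effective-dimension lower bound of \cite{DK12}.

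The steps that need genuine (if routine) care are: verifying that $U_\sigma$ is uniformly effectively clopen with $\mu^*$-measure exactly $\T_\mu(\llb\sigma\rrb)$, which follows from the definition of the canonical code together with the fact that $T_\X$ has at most $2^k$ nodes of each length $k$; and checking that the inclusion rule for $V_d$ is genuinely c.e., which is the one place where the hypothesis that $\T_\mu$ is computable on clopen sets is used. I expect this last point to be the main obstacle, and a mild one — everything else is a direct Kraft-inequality test construction.
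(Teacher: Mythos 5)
Your proof is correct and follows essentially the same route as the paper's: both build a $\mu^*$-Martin-L\"of test from the sets of closed sets meeting $\llb\sigma\rrb$ for strings $\sigma$ with $K(\sigma)<-\log\T_\mu(\llb\sigma\rrb)-d$, bound its measure via $\mu^*(U_\sigma)=\T_\mu(\llb\sigma\rrb)$ and the Kraft inequality, and conclude from the randomness of a closed set containing $x$. The only cosmetic differences are that the paper passes to the antichain of minimal such strings and phrases the argument contrapositively, neither of which changes the substance.
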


\begin{proof}
Suppose that $x$ is such that for every $c$, there is some $n$ such that
\[
K(x\uh n)<-\log\T_\mu(\llb x\uh n\rrb)-c.
\]
We first define
\[
S_i=\{\sigma\in\str:K(\sigma)<-\log\T_\mu(\llb\sigma\rrb)-i\}.
\]
Next, we let $\widehat S_i$ consist of those strings in $S_i$ with no proper initial segments in $S_i$, so that $\llb\widehat S_i\rrb=\llb S_i\rrb$.
Lastly, we define
\[
\U_i=\{Q\in\C(\cs):(\exists\sigma\in\widehat S_i)[Q\cap\llb\sigma\rrb\neq\emptyset]\}.
\]
Then
\[
\mu^*(\U_i)\leq\sum_{\sigma\in\widehat S_i}\mu^*(\{Q\in\C(\cs):Q\cap\llb\sigma\rrb\}\neq\emptyset)=\sum_{\sigma\in\widehat S_i}\T_\mu(\llb\sigma\rrb)<\sum_{\sigma\in \widehat S_i}2^{-K(\sigma)-i}\leq 2^{-i},
\]
where the last inequality follows from the fact that $\sum_{\sigma\in\str}2^{-K(\sigma)}\leq 1$.  Thus, $(\U_i)_{i\in\omega}$ forms a $\mu^*$-Martin-L\"of test.  Now let $Q\in\C(\cs)$ be such that $x\in Q$.  Then for each $i$, there is some least $n$ such that $x\uh n\in \widehat{S}_i$, and thus $Q\in\U_i$.  It follows that no $Q\in\C(\cs)$ containing $x$ is $\mu^*$-random.
\end{proof}

An order function $f:\omega\rightarrow\omega$ is a non-decreasing, unbounded function.  Recall further that a real $x\in\cs$ is \emph{complex} if there is some computable order function $f$ such that $K(x\uh n)\geq f(n)$ for every $n$.  Let $(p_n)_{n\in\omega}$ be the collection of hitting probabilities determined by the collection of random online functions.   Since $(p_n)_{n\in\omega}$ is a computable, strictly decreasing sequence of rationals that converges to 0, it follows that the function $f(n)=-\log p_n$ is a computable order function.

This observation, combined with Theorem \ref{thm:isc-capacity}, yields:
\begin{corollary}
If $x\in\cs$ is in the range of a random online function, then 
\[
K(x\uh n)\geq-\log p_n-c
\]
for some $c\in\omega$.  In particular, $x$ is complex.
\end{corollary}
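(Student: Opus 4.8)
The plan is to obtain the bound directly from Theorem~\ref{thm:isc-capacity}, once the class of random online functions has been repackaged as the class of random closed sets of a suitable computable measure. First I would note that every random online function is total (Proposition~\ref{prop:prob-partial}, since here $r=1/2\ge 1/4$), so Theorem~\ref{thm-fns-caps} applies with $\nu^{**}=\mu_{1/2}^{**}$: its proof produces a computable measure $\nu_\Phi^{*}$ on $\C(\cs)$ whose random closed sets are exactly the ranges of random online functions and whose associated capacity is $\T_{\nu_\Phi}(Q)=\mu_{1/2}^{**}(\{F:\ran(F)\cap Q\neq\emptyset\})=\T_{1/2}(Q)$. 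In particular, by the computation recorded just before Theorem~\ref{thm-online-range}, $\T_{\nu_\Phi}(\llb\sigma\rrb)=\T_{1/2}(\llb\sigma\rrb)=p_{|\sigma|}$ for every $\sigma\in\str$, where $(p_n)_{n\in\omega}$ is the computable, strictly decreasing sequence of hitting probabilities determined by $p_1=3/4$, $p_{n+1}=p_n(1-\tfrac14 p_n)$ (with $p_0=1$), which tends to $0$.

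With this in hand I would fix $x$ in the range of some random online function $F$. Then $\ran(F)$ is a $\nu_\Phi^{*}$-random closed set containing $x$, so Theorem~\ref{thm:isc-capacity}, applied with $\mu^{*}=\nu_\Phi^{*}$, supplies a constant $c$ with
\[
K(x\uh n)\ \ge\ -\log\T_{\nu_\Phi}(\llb x\uh n\rrb)-c\ =\ -\log p_n-c
\]
for all $n$; this is the first assertion. For the claim that $x$ is complex, I would use that $(p_n)_{n\in\omega}$ is a computable, strictly decreasing sequence of positive rationals with $p_n\to 0$, so $n\mapsto -\log p_n$ is non-decreasing, unbounded and effectively approximable; rounding down and truncating at $0$ turns $-\log p_n-c$ into a genuine integer-valued computable order function $g$ (the inequality $K\ge 0$ absorbs the truncation), and $K(x\uh n)\ge g(n)$ for all $n$ then witnesses that $x$ is complex.

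I do not anticipate a real obstacle: the substantive work --- deriving the recurrence for the $p_n$, checking $p_n\to 0$, and building the measure $\nu_\Phi^{*}$ in Theorem~\ref{thm-fns-caps} --- is already done above, so the corollary reduces to lining up hypotheses. The one point that repays a little care is verifying that $\T_{\nu_\Phi}$ is exactly the capacity whose interval values are the hitting probabilities $p_n$, i.e.\ that the measure produced by Theorem~\ref{thm-fns-caps} for $r=1/2$ is the same $\nu_\Phi^{*}$ whose capacity was computed via the effective Choquet theorem; once that identification is explicit, everything else is bookkeeping.
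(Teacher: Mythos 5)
Your proof is correct and follows essentially the same route as the paper: the paper likewise obtains the bound by viewing the range of a random online function as a random closed set for the measure induced as in Theorem~\ref{thm-fns-caps}, noting that the associated capacity assigns $p_{|\sigma|}$ to $\llb\sigma\rrb$, and then applying Theorem~\ref{thm:isc-capacity}, with complexity following because $n\mapsto-\log p_n$ is a computable order function. Your extra care about totality (via Proposition~\ref{prop:prob-partial}) and about rounding $-\log p_n$ to an integer-valued order function only makes explicit details the paper leaves implicit.
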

%

\noindent We conjecture that the converse, or some minor variant thereof, holds as well.

\section{Random Online Partial Functions}\label{sec-partial-online}

As we have seen, for each symmetric Bernoulli measure $\mu_r$ on $\cs$ with $r\in(0,1/2)$, the collection of ranges of the $\mu_r^{**}$-random functions is not the collection of standard random closed sets.  The collection of ranges of random online functions was, at first glance, a reasonable candidate for being equal to the collection of standard random closed sets, but this too fails by Theorem \ref{thm-online-range}.  Thus, we cannot use symmetric Bernoulli measures to obtain such a class of random functions.

As discussed in Section \ref{sec-functions-to-capacities},  the capacity $\T$ associated to the standard random closed sets satisfies $\T(\{x\})=0$ for every $x\in\cs$.  Thus, for any collection of random functions the ranges of which are the standard random closed sets, we need the capacity associated with this collection of functions to converge to zero quickly.  Note, however, that by Theorem \ref{thm:hitting-prob}, as we increase the possibility of delay in our functions, this actually increases the probability that we hit a given real.  

The first step to a solution is to introduce a notion of random online \emph{partial}  function.  As with the representing functions of continuous functions on $\cs$, we define an online partial function to be given by a $\{0,1,2\}$-valued representing function. The values 0 and 1 play the same role as before, but the 2's play a different role.  If $F$ is the partial function given by a $\{0,1,2\}$-valued representing function $f$, for each $\sigma\in\str$ with $f(\sigma)=2$, we have $F(X)\diverge$ for every $X\succ\sigma$.  That is, instead of causing our function to delay at a given node, a node labelled with a `2' indicates that our function is undefined on all reals extending this node.

Observe that each symmetric Bernoulli measure $\mu_r$ on $\ccs$ yields a notion of random online partial function.  However, for certain choices of $r$, we are not even guaranteed to have any functions with non-empty domain.

\begin{proposition}
If $\mu_r$ is a computable symmetric Bernoulli measure on $\ccs$, then the probability that a $\mu_r^{**}$-random online partial function has non-empty domain is 0 if $r< 1/4$ and is
\[
\dfrac{4r-1}{4r^2}
\]
if $r\geq1/4$.

\end{proposition}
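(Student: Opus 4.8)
The plan is to reduce the statement directly to Lemma~\ref{lem:key}. The first step is to express, in combinatorial terms, the event whose probability we must compute. Suppose $F$ is the online partial function given by a representing function $f\colon\str\setminus\{\epsilon\}\to\{0,1,2\}$. By the definition of an online partial function, $f(\sigma)=2$ forces $F(X)\diverge$ for every $X$ with $\sigma\prec X$; hence for a fixed $x\in\cs$ we have $F(x)\halts$ if and only if $f(x\uh n)\neq 2$, i.e.\ $f(x\uh n)\in\{0,1\}$, for all $n>0$. Therefore $\dom(F)\neq\emptyset$ if and only if there exists $x\in\cs$ with $f(x\uh n)\in\{0,1\}$ for all $n>0$, which is precisely the event considered in Lemma~\ref{lem:key} taking $A=\{0,1\}$.

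The second step is the substitution. With $A=\{0,1\}$ and $p_0=p_1=r$, Lemma~\ref{lem:key} gives $p=\sum_{i\in A}p_i=2r$, and the lemma tells us that the $\mu_r^{**}$-measure of the collection of online partial functions $F$ with $\dom(F)\neq\emptyset$ equals $0$ when $p\leq 1/2$ and equals $\tfrac{2p-1}{p^2}$ when $p>1/2$. Substituting $p=2r$ turns the threshold $p=1/2$ into $r=1/4$ and turns $\tfrac{2p-1}{p^2}$ into $\tfrac{4r-1}{4r^2}$. Since $\tfrac{4r-1}{4r^2}$ evaluates to $0$ at $r=1/4$, the two cases merge into the stated form: the probability is $0$ for $r<1/4$ and $\tfrac{4r-1}{4r^2}$ for $r\geq 1/4$. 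The computability of $\mu_r$ plays no role in this computation; it is assumed only so that the notion of a $\mu_r^{**}$-random online partial function is meaningful. Note also that, unlike in Proposition~\ref{prop:prob-partial}, there is no reason for this probability to obey a $0$--$1$ law, and indeed for $r\in(1/4,1/2)$ it lies strictly between $0$ and $1$ (reaching $1$ only at $r=1/2$, where no $2$'s occur).

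There is essentially no obstacle here beyond bookkeeping. The one point deserving care is that in the characterization of $\dom(F)\neq\emptyset$ the quantifier ``for all $n$'' must range over $n>0$, since the representing function is undefined at $\epsilon$; this matches exactly the range of the quantifier in the statement and proof of Lemma~\ref{lem:key}, so the application is legitimate. (This is the same pattern already used to derive Proposition~\ref{prop:prob-partial} with $A=\{2\}$ and Theorem~\ref{thm:hitting-prob} with $A=\{0,2\}$; here the only change is $A=\{0,1\}$.)
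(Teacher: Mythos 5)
Your proposal is correct and matches the paper's own proof: both reduce the statement to Lemma~\ref{lem:key} with $A=\{0,1\}$, so $p=2r$, and then substitute to get the threshold $r=1/4$ and the value $\frac{4r-1}{4r^2}$. The extra remark that the two cases merge at $r=1/4$ (where the formula gives $0$) is a harmless clarification not spelled out in the paper.
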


\begin{proof}
An online partial function $F$ has non-empty domain if and only if there is some $x\in\cs$ such that $f(x\uh n)\neq 2$ for every $n>0$.  Let $A=\{0,1\}$, so that $f(\sigma)\in A$ with probability $p=2r$ for every $\sigma\in\str\setminus\{\epsilon\}$.  Applying Lemma \ref{lem:key}, the $\mu^{**}$-measure of functions $F$ such that there exists $x\in\cs$ with $f(x \uh n) \in A$ for all $n$ equals 0 if $r<1/4$ and equals $\frac{2p-1}{p^2}=\frac{4r-1}{4r^2}$ if $r \geq 1/4$. 

%
%
%
%

\end{proof}

The final step to obtaining a collection of random functions whose ranges are the standard random closed sets is to consider a wider class of measures, namely, computable, symmetric \emph{generalized} Bernoulli measures on $\ccs$.  Such a measure is given by a computable sequence of rationals $\vec{r}=(r_i)_{i\in\omega}$ with $r_i\leq 1/2$ for every $i$ such that for each $n$ and each $\sigma$ of length $n$, $\mu_{\vec{r}}(\sigma 0\mid\sigma)=\mu(\sigma 1\mid\sigma)=r_n\cdot\mu(\sigma)$ and $\mu_{\vec{r}}(\sigma 2\mid\sigma)=(1-2r_n)\mu(\sigma)$.  We can now prove the following.

\begin{theorem}\label{thm-capacity-racs}
Let $\T$ be an computable capacity on $\C(\cs)$ such that there is a computable sequence of rationals $(p_i)_{i\in\omega}$ satisfying
\begin{itemize} 
\item[(i)] for each $n$, $\T(\llb\sigma\rrb)=p_n$ for every $\sigma\in 2^n$, and
\item[(ii)] $\lim_{n\rightarrow\infty}p_n=0$.
\end{itemize}
Then there is a computable, generalized symmetric Bernoulli measure $\mu_{\vec{r}}$ on $\ccs$ such that the ranges of the $\mu_{\vec{r}}^{**}$-random online partial functions are precisely the random closed sets associated with the capacity $\T$. Moreover, in the case that  $\lim_{n\rightarrow\infty}\frac{p_{n+1}}{p_n}=p$ for some $p\in[0,1]$, we have $\lim_{n\rightarrow \infty}r_n=\frac{p}{2}$.
\end{theorem}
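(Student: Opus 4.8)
The plan is to combine the uniqueness in Choquet's theorem with the randomness-conservation machinery of \cite{BP12} already invoked for Theorem~\ref{thm-fns-caps}, and to pin down $\vec r$ by a direct computation of the hitting probabilities of the ranges. First I would apply Theorem~\ref{thchc}(2) to fix a computable measure $\nu$ on $\ccs$ with $\T_\nu=\T$; since $\T$ is level-uniform (hypothesis~(i)), the Choquet construction yields a $\nu$ under which the coded tree is generated by declaring each node present at level $\ell$ to have, independently of the others, both children with probability $2p_{\ell+1}/p_\ell-1$ and a single prescribed child with probability $1-p_{\ell+1}/p_\ell$ --- these lie in $[0,1]$ by monotonicity of $\T$ and the alternating-of-order-$2$ inequality applied to $\llb\sigma0\rrb,\llb\sigma1\rrb$, using $p_0=\T(\cs)=1$. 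By definition the random closed sets associated to $\T$ are the $\nu$-random ones. The goal is then to produce a computable generalized symmetric Bernoulli measure $\mu_{\vec r}$ so that the range map of Theorem~\ref{thm-fns-caps}, applied to $\mu_{\vec r}^{**}$ and conditioned on the event $\ran(F)\neq\emptyset$, pushes forward to a computable probability measure on $\ccs$ whose Choquet capacity is $\T$; by classical Choquet uniqueness (a measure on $\C(\cs)$ is determined by its hitting functional) that measure must be $\nu$, and preservation of randomness together with no randomness ex nihilo (\cite[Theorems~3.2, 3.5]{BP12}) then identify its random points --- which are exactly the ranges of the $\mu_{\vec r}^{**}$-random online partial functions having nonempty range --- with the $\nu$-random closed sets.

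To choose $\vec r$ I would compute the hitting probabilities directly. The point is that for an online partial function $F$ with representing function $f$ one has $y\in\ran(F)$ iff there is $X\in\cs$ with $f(X\uh m)=y(m-1)$ for all $m\ge 1$ (the constraint $f(X\uh m)\neq2$ being automatic once the value is prescribed in $\{0,1\}$), so $\ran(F)$ depends on $f$ only through its $\{0,1\}$-versus-$2$ pattern. Using the branching structure of $\mu_{\vec r}$ and compactness, as in the proof of Lemma~\ref{lem:key}, I would set up coupled recursions for $q_\ell$, the probability that a level-$\ell$ node of $T_F$ has an extension into $[T_F]$, and for $P^\ell_n$, the probability that a fixed interval of length $n$ is hit by the part of $\ran(F)$ issuing from a level-$\ell$ node. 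Writing $\beta_\ell$ for the product $r_{\ell+1}q_{\ell+1}\in[0,1/2]$, one gets $q_\ell=4\beta_\ell(1-\beta_\ell)$ and, for the conditional hitting probabilities $\widehat P^{(\ell)}_n:=P^\ell_n/q_\ell$,
\[
\widehat P^{(\ell)}_n=\frac{\widehat P^{(\ell+1)}_{n-1}\bigl(2-\beta_\ell\,\widehat P^{(\ell+1)}_{n-1}\bigr)}{4(1-\beta_\ell)},\qquad \widehat P^{(\ell)}_0=1.
\]
Imposing $\widehat P^{(0)}_n=p_n$ for all $n$ (note $\widehat P^{(0)}_0=1=p_0$ automatically) one solves level by level: $p_1$ gives $\beta_0=(4p_1-2)/(4p_1-1)$, and given $\beta_0,\dots,\beta_{k-1}$ the equation $\widehat P^{(0)}_{k+1}=p_{k+1}$ unwinds through $\widehat P^{(1)}_k,\dots,\widehat P^{(k)}_1$ to yield $\beta_k$, whence $r_{\ell+1}=\beta_\ell/\bigl(4\beta_{\ell+1}(1-\beta_{\ell+1})\bigr)$. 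Each step is a finite composition of rational operations and square roots applied to the computable rationals $p_n$, so $\vec r$ is computable; in particular $q_0=4\beta_0(1-\beta_0)$ is a computable positive real, so the conditioning above is on a $\Pi^0_1$ class of computable positive measure and is harmless.

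With $\vec r$ so defined, the conditional law of $\ran(F)$ has hitting probability $p_n$ on every interval of length $n$, hence Choquet capacity $\T$, so it is $\nu$; together with the Turing-functional/preservation argument of Theorem~\ref{thm-fns-caps} (here the range map is a partial functional defined on a $\mu_{\vec r}$-conull subset of $\{\,\ran(F)\neq\emptyset\,\}$) this gives the main assertion. For the asymptotic claim, if $p_{n+1}/p_n\to p$ then, using $p_n\to0$, a limit argument on the displayed recursion shows $\beta_\ell\to 1-\tfrac1{2p}$, and hence $r_n=\beta_{n-1}/\bigl(4\beta_n(1-\beta_n)\bigr)\to \tfrac1{4(1-(1-1/2p))}=\tfrac p2$.

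I expect the main obstacle to be the combinatorial bookkeeping of the recursion, and above all verifying that the recursively defined $\beta_\ell$, and then $r_\ell$, remain in $[0,1/2]$: this is tight --- $\beta_0\le1/2$ already forces $p_1\le3/4$, and one needs the analogous constraints $\beta_\ell\le 2\beta_{\ell+1}(1-\beta_{\ell+1})$ at every level --- and it is where the hypotheses on $\T$ (its being a genuine capacity, and whatever side conditions on $(p_n)$ are needed for realizability) enter, along with a small continuity argument to get convergence of $\beta_\ell$ for the ``moreover''. A secondary point requiring care is that the range map really is a partial Turing functional defined on a $\mu_{\vec r}$-conull subset of the nonempty-range event in the partial setting, and that conditioning on that event preserves Martin-L\"of randomness.
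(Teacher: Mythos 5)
Your overall skeleton is the same as the paper's (condition on the two top-level labels of the representing function to get a quadratic recursion for hitting probabilities, solve for the parameters, then reuse the Turing-functional/preservation-of-randomness transfer from Theorem \ref{thm-fns-caps}), but the way you pin down $\vec r$ diverges, and as written it has genuine gaps. The paper works directly with the recursion $p_{n+1}=2p_nr_{n+1}-p_n^2r_{n+1}^2$ and solves it in closed form, $r_{n+1}=\frac{p_{n+1}}{p_n(1+\sqrt{1-p_{n+1}})}$, from which computability of $\vec r$ and the limit $r_n\to p/2$ are immediate. You instead normalize by the survival probability and try to match the \emph{conditional} hitting probabilities, introducing $q_\ell$ and $\beta_\ell=r_{\ell+1}q_{\ell+1}$. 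The first problem is circularity: $\beta_\ell$ is defined through the survival probabilities $q_{\ell+1}$ of the very measure you are trying to build. Extracting numbers $\beta_0,\beta_1,\dots$ from the $p_n$'s and then setting $r_{\ell+1}=\beta_\ell/(4\beta_{\ell+1}(1-\beta_{\ell+1}))$ does not yet show that the \emph{actual} survival probabilities of $\mu_{\vec r}$ satisfy $q_\ell=4\beta_\ell(1-\beta_\ell)$: the recursion $q_\ell=2r_{\ell+1}q_{\ell+1}-(r_{\ell+1}q_{\ell+1})^2$ has no base case (it is anchored ``at infinity'') and admits other solutions, e.g.\ $q_\ell\equiv 0$, so an additional uniqueness/approximation argument is required before you may conclude that the conditional hitting probabilities of your measure are the $p_n$'s. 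The second, and more serious, problem is the one you yourself flag and defer: showing that the recursively produced $\beta_\ell$, and hence the $r_\ell$, lie in $[0,1/2]$. This is exactly the realizability content of the theorem (already $\beta_0=(4p_1-2)/(4p_1-1)\ge 0$ needs $p_1\ge 1/2$, which must be extracted from the capacity axioms via $1=\T(\cs)\le \T(\llb 0\rrb)+\T(\llb 1\rrb)$, and the deeper constraints are not addressed at all), so leaving it as an ``expected obstacle'' leaves the existence claim unproved.

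Two further points. The ``moreover'' clause in your version rests on an asserted convergence $\beta_\ell\to 1-\frac{1}{2p}$, which is not established (and cannot be waved through, since the $\beta_\ell$ are defined by an increasingly deep composition of the recursion); in the paper this clause is a one-line computation from the closed-form expression for $r_{n+1}$. Also note that your construction, if completed, yields numerically different parameters from the paper's (e.g.\ for $p_n=(2/3)^n$ your first-level parameter is about $0.48$ versus the paper's $r_1=\frac{2/3}{1+\sqrt{1/3}}\approx 0.42$), because you match normalized hitting probabilities while the paper matches the unnormalized recursion; so you cannot fall back on the paper's formula to finish, and the final theorem's explicit $\vec r$ would not come out of your argument. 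In fairness, your conditioning on $\{\ran(F)\neq\emptyset\}$ responds to a real wrinkle that the paper glosses over (under these measures a random online partial function can have empty range, so the unconditional hitting functional assigns $\cs$ a value less than $1$), and the conditioning-preserves-randomness step you invoke for that positive-measure $\Pi^0_1$ event is plausible but also left unargued. As it stands, the proposal identifies the right ingredients but does not close the construction of $\vec r$, which is the heart of the statement.
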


\begin{proof}
To obtain the measure $\mu_{\vec{r}}$, we suppose we have a collection of $\mu_{\vec{r}}$-random functions that yield the hitting probabilities $(p_n)_{n\in\omega}$ 
then follow the proof of Theorem \ref{thm:hitting-prob} to recover the values of the sequence $(r_i)_{i\in\omega}$.

Without loss of generality, we can consider the probability of hitting $\llb0^n\rrb$ for each $n$.  By convention, $p_0=\T(\emptyset)=1$.  For $n\geq 0$, to determine the relationship between $p_{n+1}$ and $p_n$, we consider the possible initial values $f(0)$ and $f(1)$ of a representing function $f:\str\setminus\{\epsilon\}\rightarrow\{0,1,2\}$ corresponding to an arbitrary $F\in\F(\cs)$.  Due to our new interpretation of 2s, we only have a total of four cases to consider:

\begin{itemize}
\item[] \emph{Case 1}:  $f(0)\neq 0$ and $f(1)\neq 0$, then $\ran(F)\cap\llb 0^{n+1}\rrb=\emptyset$.
\item[] \emph{Case 2}: If $f(0)=f(1)=0$, which occurs with probability $r_{n+1}^2$, then $\ran(F)\cap\llb 0^{n+1}\rrb\neq\emptyset$ with probability $1-(1-p_n)^2=2p_n-p_n^2$.
\item[] \emph{Case 3}:  $f(i)=0$ and $f(1-i)=1$, which occurs with probability $2r_{n+1}^2$, then $\ran(F)\cap\llb 0^{n+1}\rrb\neq\emptyset$ with probability $p_n$.
\item[] \emph{Case 4}: $f(i)=0$ and $f(1-i)=2$, which occurs with probability $2r_{n+1}(1-2r_{n+1})$, then $\ran(F)\cap\llb 0^{n+1}\rrb\neq\emptyset$ with probability $p_n$.
\end{itemize}
Combining these cases yields
\[
p_{n+1}=(2p_n-p_n^2)r_{n+1}^2+2p_nr_{n+1}^2+2r_{n+1}(1-2r_{n+1})p_n,
\]
which simplifies to
\[
p_{n+1}=2p_nr_{n+1}-p_n^2r_{n+1}^2.
\]
Solving for $r_{n+1}$ yields
\[
r_{n+1}=\frac{p_{n+1}}{p_n(1+\sqrt{1-p_{n+1}})}.
\]
It follows that the capacity induced by the family of $\mu_{\vec{r}}^{**}$-random online partial functions is the capacity $\T$.  Now, the map $\Phi$ that maps a $\mu_{\vec{r}}^{**}$-random online partial function $F$ to its range is still a computable map, as we can effectively determine those basic open neighborhoods $\llb\sigma\rrb$ on which $F$ is undefined.  Then if we let $\nu^{*}$ be the computable measure on $\C(\cs)$ induced by $\Phi$ and $\mu_{\vec{r}}$ (as in the proof of Theorem \ref{thm-fns-caps}), then we will have
\[
\T(Q)=\mu_{\vec{r}}^{**}(\{F\in\F(\cs):\ran(F)\cap Q\neq\emptyset\})=\nu^{*}(\{C\in\C(\cs):\C\cap Q\neq\emptyset\}).
\]
for every $Q\in\C(\cs)$.  Thus, the ranges of the $\mu_{\vec{r}}^{**}$-random online partial functions are the random closed sets associated to $\T$.

%

Lastly, observe that
\[
\lim_{n\rightarrow \infty}r_n=\lim_{n\rightarrow \infty}\frac{p_{n+1}}{p_n(1+\sqrt{1-p_{n+1}})}=\Bigl(\lim_{n\rightarrow\infty}\frac{p_{n+1}}{p_n}\Bigr)\Bigl(\lim_{n\rightarrow\infty}\frac{1}{1+\sqrt{1-p_{n+1}}}\Bigr)=\frac{p}{2}.
\]
\end{proof}

\begin{theorem}
Let $\vec{r}=(r_i)_{i\in\omega}$ be defined by 
\[
r_i=\frac{2/3}{1+\sqrt{1-\Bigl(\frac{2}{3}\Bigr)^i}}.
\]
Then the collection of ranges of the $\mu_{\vec{r}}^{**}$-random online partial functions is equal to the collection of the standard random closed sets.
\end{theorem}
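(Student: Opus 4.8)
The plan is to apply Theorem~\ref{thm-capacity-racs} to the capacity associated with the standard random closed sets. Let $\mu$ be the uniform measure on $\ccs$ and let $\T_\mu$ be the computable capacity on $\C(\cs)$ whose associated random closed sets are the standard random closed sets (via Theorem~\ref{thchc}). As recorded in Section~\ref{sec-functions-to-capacities}, $\T_\mu(\llb\sigma\rrb)=\bigl(\tfrac{2}{3}\bigr)^{n}$ for every $n\in\omega$ and every $\sigma\in\str$ of length $n$; this is the only nontrivial input, and it is already established (it follows by induction on $|\sigma|$ from the $\tfrac{1}{3},\tfrac{1}{3},\tfrac{1}{3}$ branching probabilities of the canonical code of a closed set under $\mu$).

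First I would put $p_n=\bigl(\tfrac{2}{3}\bigr)^{n}$ and verify the hypotheses of Theorem~\ref{thm-capacity-racs}: $(p_n)_{n\in\omega}$ is a computable sequence of rationals, $\T_\mu(\llb\sigma\rrb)=p_n$ for every $\sigma\in 2^{n}$ by the displayed identity, and $\lim_{n\to\infty}p_n=0$. Hence conditions (i) and (ii) of that theorem hold, so it supplies a computable generalized symmetric Bernoulli measure $\mu_{\vec{r}}$ on $\ccs$ for which the ranges of the $\mu_{\vec{r}}^{**}$-random online partial functions are exactly the random closed sets associated with $\T_\mu$, i.e.\ exactly the standard random closed sets. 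This already proves the theorem up to identifying $\vec{r}$ with the explicit sequence in the statement.

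To carry out that identification I would invoke the formula produced in the proof of Theorem~\ref{thm-capacity-racs}, namely $r_{n+1}=\dfrac{p_{n+1}}{p_n\bigl(1+\sqrt{1-p_{n+1}}\bigr)}$ for $n\ge 0$. Substituting $p_n=\bigl(\tfrac{2}{3}\bigr)^{n}$ and cancelling $\bigl(\tfrac{2}{3}\bigr)^{n}$ gives
\[
r_{n+1}=\frac{\bigl(\tfrac{2}{3}\bigr)^{n+1}}{\bigl(\tfrac{2}{3}\bigr)^{n}\bigl(1+\sqrt{1-\bigl(\tfrac{2}{3}\bigr)^{n+1}}\bigr)}=\frac{2/3}{1+\sqrt{1-\bigl(\tfrac{2}{3}\bigr)^{n+1}}},
\]
which is precisely $r_i$ from the statement for $i=n+1\ge 1$. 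The value $r_0=2/3$ obtained by plugging $i=0$ into the displayed formula is immaterial, since no length-$0$ string receives a label and only $r_1,r_2,\dots$ are used. As sanity checks one verifies that $r_i\le\tfrac12$ for $i\ge 1$ — equivalent to $\bigl(\tfrac{2}{3}\bigr)^{i}\le\tfrac{8}{9}$, which holds — and that, since $\lim_{n\to\infty}p_{n+1}/p_n=\tfrac{2}{3}$, the ``moreover'' clause of Theorem~\ref{thm-capacity-racs} predicts $\lim_{n\to\infty}r_n=\tfrac{1}{3}$, consistent with $\lim_{i\to\infty}\tfrac{2/3}{1+\sqrt{1-(2/3)^{i}}}=\tfrac{1}{3}$. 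There is no genuine obstacle here: the statement is a direct instantiation of Theorem~\ref{thm-capacity-racs}, and all the work is confined to recalling the value of $\T_\mu$ on basic clopen sets and performing the substitution above.
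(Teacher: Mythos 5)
Your proposal is correct and follows essentially the same route as the paper: instantiate Theorem~\ref{thm-capacity-racs} with $p_n=\bigl(\tfrac{2}{3}\bigr)^n$ (the capacity of the standard random closed sets on cylinders) and read off $\vec{r}$ from the formula $r_{n+1}=\frac{p_{n+1}}{p_n(1+\sqrt{1-p_{n+1}})}$ derived in its proof. Your explicit substitution, the check that $r_i\leq\tfrac12$ for $i\geq 1$, and the remark about the unused $i=0$ value are slightly more careful than the paper's own two-line argument, but add nothing materially different.
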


\begin{proof}
Let $\T$ be the capacity associated to the standard random closed sets.  As discussed in Section \ref{sec-functions-to-capacities}, we have $\T(\llb\sigma\rrb)=\Bigl(\frac{2}{3}\Bigr)^n$ for every $n\in\omega$.  Then $\T$ satisfies the conditions of Theorem \ref{thm-capacity-racs}.  By the proof of Theorem \ref{thm-capacity-racs}, if $\mu_{\vec{r}}$ is the computable, symmetric generalized Bernoulli measure on $\ccs$ where 
\[
r_i=\frac{2/3}{1+\sqrt{1-\Bigl(\frac{2}{3}\Bigr)^i}}
\]
for every $i\in\omega$, then the ranges of the $\mu_{\vec{r}}^{**}$-random online partial functions are precisely the standard random closed sets.
\end{proof}

\bibliographystyle{plain}
\bibliography{random}

\end{document}